\renewcommand{\=}{\equiv}
\newcommand{\n}{\tilde n}
\crefname{equation}{}{}
\Crefname{equation}{}{}
\def\sref#1{\S$\ref{#1}$}
\def\lref#1{Lemma~$\ref{#1}$}
\def\tref#1{Theorem~$\ref{#1}$}
\def\cjref#1{Conjecture~$\ref{#1}$}
\renewcommand{\geq}{\geqslant}
\renewcommand{\leq}{\leqslant}
\renewcommand{\ge}{\geqslant}
\renewcommand{\le}{\leqslant}
\newcommand{\etal}{\emph{et al.}\xspace}
\newcommand{\ie}{\emph{i.e.}\xspace}
\tikzset{square matrix/.style={
    matrix of nodes,
    column sep=-\pgflinewidth, row sep=-\pgflinewidth,
    nodes={draw,
      minimum height=15pt,
      anchor=center,
      text width=13pt,
      align=center,
      inner sep=0pt
    },
  },
  square matrix/.default=2cm
}
\colorlet{trans}{blue!50}
\colorlet{trans2}{red!20}
\renewcommand\epsilon{\varepsilon}
\newcommand{\unkn}{\bullet}
\newcommand{\X}{\times\xspace}
\newcommand{\x}{$\X$\xspace}
\algnewcommand\algorithmicforeach{\textbf{for each}}
\newcommand{\myComment}[1]{\Comment{{\color{blue} #1}}}
\theoremstyle{plain}
\newtheorem{theorem}{Theorem}
\newtheorem{lemma}[theorem]{Lemma}
\theoremstyle{definition}
\newtheorem{conjecture}[theorem]{Conjecture}
\newtheorem{example}[theorem]{Example}
\theoremstyle{remark}
\author{Darcy~Best\thanks{Research supported by Endeavour Postgraduate Scholarship and NSERC CGS-D.
}}
  \author{Kyle Pula
  }
\author{Ian~M.~Wanless\thanks{Corresponding author \texttt{ian.wanless@monash.edu}. Research supported by ARC grant DP150100506.}}
\affil{\small School of Mathematics, Monash University, Australia}
\title{Small Latin arrays have a near transversal}
\date{}
\begin{document}

\maketitle

\begin{abstract}
A {\em Latin array} is a matrix of symbols in which no symbol occurs
more than once within a row or within a column. A {\em diagonal} of an
$n\times n$ array is a selection of $n$ cells taken from different rows and
columns of the array. The {\em weight} of a diagonal is the number of
different symbols on it. We show via computation that every Latin array
of order $n\le11$ has a diagonal of weight at least $n-1$. A corollary
is the existence of near transversals in Latin squares of these orders.

More generally, for all $k\le20$ we compute a lower bound on the order
of any Latin array that does not have a diagonal of weight at
least $n-k$.

\bigskip

\noindent
Keywords: Latin square, Latin array, near transversal, partial transversal, Brualdi's conjecture\\
Mathematics Subject Classification: 05B15
\end{abstract}
 
\section{\label{s:intro}Introduction}

In this article, we examine several different types of arrays of symbols.
An $m \times n$ array is \emph{row-Latin} (resp.~\emph{column-Latin}) if each row
(resp.~column) contains each symbol at most once.  If the array is both
row-Latin and column-Latin, then we say that the array is \emph{Latin}. An
\emph{equi-$n$-square} is an $n \times n$ array where each symbol is
represented exactly $n$ times, with no row or column restrictions.
A \emph{Latin square} is a Latin equi-$n$-square.

A \emph{diagonal} of an $n\times n$ array is a selection of $n$ cells
from different rows and columns of the array. The \emph{weight} of a
diagonal is the number of different symbols on it.  An \emph{entry} in
an array is a triple $(r,c,s)$ where $s$ is the symbol in cell $(r,c)$
of the array.  A \emph{partial transversal of length} $\ell$ is a set
of $\ell$ entries, each selected from different rows and columns of a
matrix, such that no two of the entries contain the same symbol.  In
an $m\times n$ matrix, partial transversals of length $\min(m,n)$ and
$\min(m,n)-1$, respectively, are known as \emph{transversals} and
\emph{near transversals}. Hence, in an $n\times n$ array, a diagonal
of weight $n$ is a transversal and a diagonal of weight $n-1$
\emph{contains} two near transversals.  Note that in our terminology,
partial transversals are defined for non-square arrays, but diagonals
are not.

It is known that there are a vast number of Latin squares that do not
contain a transversal \cite{CW17}. Thus, the best we can hope for in
general is a near transversal. The following conjecture has been
attributed to Brualdi (see \cite[p.103]{DKI}) and Stein \cite{Ste75}
and, in \cite{EHNS88}, to Ryser. It has recently been proved for
Cayley tables of finite groups \cite{GH20}.  For several
generalisations of the conjecture, in terms of hypergraphs, see \cite{AB09}
(although some of those generalisations have since been shown to fail
\cite{GRWW19}).

\begin{conjecture}\label{conj:brualdi}
  Every Latin square contains a near transversal.
\end{conjecture}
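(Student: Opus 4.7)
The plan is to argue by contradiction: suppose $L$ is a Latin square of order $n$ whose longest partial transversal has length at most $n-2$, and try to construct an augmenting structure that extends any maximum partial transversal, violating maximality. Fix such a maximum partial transversal $T$ of length $\ell\le n-2$, and let $R$, $C$, $S$ be the sets of rows, columns, and symbols not used in $T$, each of size $n-\ell\ge2$. By the Latin property, each symbol $s\in S$ appears exactly $\ell$ times in the $n\times n$ array, with exactly $n-\ell$ of those occurrences in cells whose row lies in $R$ or whose column lies in $C$, yielding a rich auxiliary bipartite structure $H_s$ for each missing symbol. The goal is to find two missing symbols $s_1,s_2\in S$ and two disjoint ``swap paths'' in $H_{s_1}\cup H_{s_2}$ that replace two entries of $T$ by entries outside $T$, strictly increasing the length.

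First, I would invoke the known lower bounds of Hatami--Shor and Keevash--Pokrovskiy--Sudakov--Yepremyan to assume $\ell = n - O(\log n/\log\log n)$, so that $|R|=|C|=|S|$ is small compared with $n$. This reduces the problem to a \emph{rainbow matching} question in the bipartite graph $K_{R,C}$ where each edge $(r,c)$ is coloured by the symbol $L(r,c)$: one seeks a rainbow matching of size $|R|$ using only colours from $S$, possibly after rerouting a bounded number of $T$-entries to free up conflicting rows/columns/symbols. Absorption-style arguments in the style of Montgomery--Pokrovskiy--Sudakov for rainbow Hamiltonicity give a template: build a small ``absorber'' partial transversal $A\subseteq T$ that can accommodate almost any leftover configuration, then use a nibble or greedy argument to cover all but a small set of rows/columns/symbols, and finally absorb.

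The main obstacle is exactly the gap between what these probabilistic and absorption methods currently deliver and the tight bound $n-1$. Every published technique loses at least a $\log$-type factor, whereas \cjref{conj:brualdi} asks for loss at most $1$. Overcoming this would require either (i) a deterministic augmenting-path structure specific to Latin squares, where the algebraic identity $L(r,c_1)=L(r,c_2)\Rightarrow c_1=c_2$ is used globally rather than locally, or (ii) an invariant that forces a Latin square with transversal-deficiency $\ge2$ to have exploitable additional regularity (\eg, algebraic or character-theoretic structure analogous to the Cayley-table case \cite{GH20}). For small orders, the present paper bypasses this obstruction by exhaustive computation on the much larger class of Latin arrays; for general $n$, closing the deficiency-gap is the central difficulty and the reason the conjecture remains open.
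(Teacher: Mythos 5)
You were asked about Conjecture~\ref{conj:brualdi}, which is exactly that --- a conjecture. The paper does not prove it in general; it only establishes the stronger statement for Latin \emph{arrays} of order $n\le 11$ (Theorem~\ref{th:near-trans-11}), by an exhaustive $\#$-swap-based computer search (Algorithms~\ref{alg:hash1} and~\ref{alg:hash2}) iterated via Lemma~\ref{l:extend}. Your proposal is likewise not a proof but a research plan, and it contains a genuine gap which your own final paragraph concedes: no step of the plan actually constructs the augmenting structure whose existence is the entire content of the conjecture.

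The gap can be located precisely. After invoking Theorem~\ref{th:n-log} to make the deficiency $n-\ell$ small, you claim the problem ``reduces to a rainbow matching question'' in the bipartite graph $K_{R,C}$ with colours from $S$, ``possibly after rerouting a bounded number of $T$-entries.'' This reduction is vacuous: if $T$ is a \emph{maximum} partial transversal, then no symbol of $S$ occurs in any cell of $R\times C$ at all, since such an occurrence would extend $T$ immediately. So the rainbow matching you seek has no edges to use, and everything is hidden in the ``rerouting,'' for which no bound of the kind you need is known. The paper's own Lemma~\ref{lem:weak-trans} illustrates that the swap cascades one is forced into can run for $n-2$ steps, i.e.\ they are not bounded; and Theorem~\ref{th:stein-wrong} shows that once the Latin hypothesis is weakened, augmenting arguments of this type genuinely fail. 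Every published technique (Theorems~\ref{thm:n-sqrt-n}, \ref{th:n-log2}, \ref{th:n-log}) loses an error term that grows with $n$, which is precisely why the conjecture is open; invoking absorption ``in the style of'' rainbow Hamiltonicity results does not close a gap that those methods themselves cannot close. There are also factual slips: in a Latin square of order $n$ each symbol of $S$ appears exactly $n$ times (not $\ell$ times), and for a maximum $T$ the number of its occurrences in cells whose row is in $R$ or whose column is in $C$ is $2(n-\ell)$, not $n-\ell$. Your closing description of what the paper does for small orders is accurate, but that computation is the only proof in existence, and it reaches only $n\le 11$.
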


The main result of this paper is that every Latin array of order
$n\le11$ has a diagonal of weight at least $n-1$. In particular,
\cjref{conj:brualdi} holds for Latin squares of order $n\le11$.
Our method will be 
computational, but it is worth bearing in mind that the number
of Latin squares of orders up to 11 is too large to treat them
individually \cite{MW05}, and the number of Latin arrays of these
orders is presumably many orders of magnitude larger. Hence the
key to the viability of our computation is to eliminate candidate
counterexamples on the basis of limited partial information
about their structure. In particular, we will need to consider
\emph{partial arrays}, where some cells may be empty (contain no
symbol). Empty cells cannot be chosen in a partial transversal, but
may be included in a diagonal (in which case, they do not contribute
to the weight of that diagonal).

Since the result we are proving for small squares is more general than
\Cref{conj:brualdi}, we now review earlier attempts to broaden that
conjecture in various directions.  In 1975, Stein~\cite{Ste75} studied
transversals (under the name ``Latin transversals'') in
equi-$n$-squares. He made the following seven interrelated
conjectures:

\begin{conjecture}\label{conj:stein}
  \mbox{ }
  \begin{enumerate}
  \item Every equi-$n$-square has a near transversal.
  \item Every $n \times n$ array in which no symbol appears more than
    $n-1$ times has a transversal.
  \item Every $(n-1) \times n$ array in which no symbol appears more
    than $n$ times has a transversal.
  \item Every $(n-1) \times n$ row-Latin array has a transversal.
  \item Every $m \times n$ array (where $m < n$) in which no symbol
    appears more than $n$ times has a transversal.
  \item Every $(n-1) \times n$ array in which each symbol appears
    exactly $n$ times has a transversal.
  \item Every $m \times n$ array (where $m < n$) in which no symbol
    appears more than $m+1$ times has a transversal.
  \end{enumerate}
\end{conjecture}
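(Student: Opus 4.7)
The plan is to begin by mapping out the implications among the seven sub-statements so that only a small core needs independent attention. Part (4) is contained in (3), since a row-Latin $(n-1)\times n$ array has at most $n-1\le n$ copies of each symbol; part (6) is the extremal case of (3); part (3) is the special case $m=n-1$ of (5); and (7) is weaker than (5), because any array satisfying (7)'s multiplicity hypothesis also satisfies (5)'s (since $m+1\le n$ whenever $m<n$). This reduces the substantive content to parts (1), (2), and (5). Part (1) is essentially independent of the rest, since equi-$n$-squares need not be row- or column-Latin, and so they share no structure with the arrays appearing in (2) and (5) beyond a symbol-multiplicity bound.

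For small orders $n$, I would adapt the computational strategy that the abstract announces for Brualdi's conjecture. Full enumeration of putative counterexamples is hopeless, so instead I would work with \emph{partial arrays}: enumerate canonical forms under the action of row, column and symbol symmetry, and for each partial array either certify a transversal via rainbow bipartite matching or certify non-completability via a Hall-type witness combined with branching on the next empty cell. The multiplicity constraints in parts (2), (5), and (7) bound the number of distinct symbols available in a counterexample, and hence limit the branching factor, so the search space for each small $n$ should be comparable to that treated in the paper's main theorem.

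The hard part is a uniform bound valid for all $n$. Classical probabilistic tools --- the Lov\'asz Local Lemma in the style of Erd\H{o}s and Spencer, or switching arguments --- seem too crude to capture the tight transversal threshold implicit in each part, as they typically deliver partial transversals of length $n - O(\log^2 n)$ rather than the $n$ or $n-1$ asked for here. A more promising route is the hypergraph absorption / iterative rainbow-matching technology developed recently for Latin-square transversal problems: encode transversals as edges of an auxiliary hypergraph on row--column--symbol triples and build a near-perfect rainbow matching via the nibble plus absorption, exploiting whatever pseudorandomness a minimal counterexample must possess. Among the core statements I expect (1) to be the hardest, because equi-$n$-squares enjoy the weakest structural hypothesis and therefore provide the least leverage for any argument.
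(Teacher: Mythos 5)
There is a fundamental gap: you are proposing to prove statements that are false, and the paper does not prove \cref{conj:stein} --- it states Stein's seven conjectures precisely in order to report that every part except (4) has been disproved. Your reduction of the seven parts to the ``core'' of (1), (2) and (5) is logically sound, but each of those three cores is known to fail. Part (1) fails by \cref{th:stein-wrong}: Pokrovskiy and Sudakov construct equi-$n$-squares with no partial transversal of length exceeding $n-\frac{1}{42}\log n$, which for large $n$ is far short of a near transversal. Part (5) fails already at $m=3$, $n=4$: the column-Latin array of \cref{t:Drisko} with $m=n-1$ has each symbol appearing exactly $n=m+1$ times and no transversal, so it simultaneously refutes parts (3), (5), (6) and (7). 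Part (2) fails for $n\in\{5,6\}$ by explicit examples of Stein and Szab\'o, and for large $n$ by padding the equi-$n$-square of \cref{th:stein-wrong} with a new row and column of fresh symbols. Consequently no amount of computational search, absorption, or nibble machinery can establish these parts; your small-order enumeration would simply rediscover the known counterexamples.

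The further difficulty is that the one part that remains open, part (4), is exactly the part your plan disposes of by reduction: you derive (4) from (3), but (3) is false, so that implication gives you nothing. Any genuine progress on \cref{conj:stein} must attack (4) directly and use the row-Latin structure in an essential way (note that Drisko's counterexample is column-Latin but not row-Latin, and the Pokrovskiy--Sudakov constructions are neither). The paper's own contribution in this direction is \cref{conj:brualdi-rectangle}, which strengthens (4) by additionally imposing the column-Latin condition, together with computational verification of near transversals in Latin arrays of small order --- none of which amounts to a proof of any part of \cref{conj:stein}.
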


Note that modern authors often include a requirement that for an array
to be row-Latin it should have the same symbols in each row. However,
Stein did not include that restriction, so in this paper we do not either.

Stein's conjectures are closely related to one another (with some 
being special cases of others). Unfortunately, all but one of these
conjectures has been disproven (only number (4) remains open). In
1998, Drisko~\cite{Drisko98} gave the (transpose of) the
following construction. The proof that we give is new, and in the spirit
of the Delta Lemma (see \cite{transurvey}). 
Here and henceforth, row and column indices always start at 0.

\begin{theorem}\label{t:Drisko}
  Let $m$ and $n$ be integers satisfying $m < n \leq 2m-2$. 
  Define an $m\times n$ column-Latin array $A=[a_{ij}]$ 
  on symbols $\{0,\dots,m-1\}$, by
\[
a_{ij}\=\begin{cases}
i&\bmod~{m}\text{ if $j\in\{0,\dots,m-2\}$},\\
i+1&\bmod~{m}\text{ if $j\in\{m-1,\dots,n-1\}$}.
\end{cases}
\]
Then $A$ has no transversals.
\end{theorem}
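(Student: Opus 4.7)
The plan is to argue by contradiction, via a Delta-Lemma style invariant. Suppose $T$ were a transversal of $A$: a selection of $m$ cells, one in each row, with pairwise distinct symbols. Since there are only $m$ available symbols, the set of symbols appearing on $T$ is exactly $\{0,1,\dots,m-1\}$, and the set of row indices used by $T$ is also $\{0,1,\dots,m-1\}$.

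The key first step is to write every entry uniformly as $a_{ij}\equiv i+\delta_j\pmod{m}$, where $\delta_j=0$ for $j\in\{0,\dots,m-2\}$ (the ``unshifted'' columns) and $\delta_j=1$ for $j\in\{m-1,\dots,n-1\}$ (the ``shifted'' columns). Then I would sum the symbols of $T$ modulo $m$ in two ways. Letting $k$ denote the number of cells of $T$ chosen from shifted columns, one gets
\[
\sum_{(i,j)\in T} a_{ij}\ \equiv\ \sum_{i=0}^{m-1} i \,+\, k \pmod{m},
\]
while distinctness of the symbols on $T$ gives
\[
\sum_{(i,j)\in T} a_{ij}\ =\ \sum_{s=0}^{m-1} s.
\]
Comparing the two forces $k\equiv 0\pmod{m}$, so $k\in\{0,m\}$.

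To conclude, I would invoke a simple column count. The array has exactly $m-1$ unshifted columns and exactly $n-m+1$ shifted columns, and the hypothesis $n\le 2m-2$ ensures that $n-m+1\le m-1$. Thus neither $k=0$ (which would require placing all $m$ cells of $T$ in at most $m-1$ unshifted columns) nor $k=m$ (which would require placing all $m$ cells of $T$ in at most $m-1$ shifted columns) is compatible with $T$ using distinct columns, a contradiction.

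I do not anticipate a genuine obstacle here: the only real insight is identifying the correct mod-$m$ invariant, and the hypothesis $n\le 2m-2$ is precisely what is needed for the column-count pigeonhole to bite on both sides. The rest is bookkeeping.
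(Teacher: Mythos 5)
Your proposal is correct and is essentially the paper's own proof: the paper defines $\Delta(i,j)=a_{ij}-i$ (your $\delta_j$), derives $\sum_i\Delta(i,j_i)\equiv 0\pmod m$ from the distinctness of the $m$ symbols, and then observes this sum equals the number of transversal cells in shifted columns, which cannot be $0\bmod m$. You merely make the final pigeonhole step (that $k\notin\{0,m\}$ because each block of columns has at most $m-1$ members, using $n\le 2m-2$) more explicit than the paper does.
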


\begin{proof} 
Define $\Delta(i,j)=a_{ij}-i$ for $0\le i<m$ and $0\le j<n$.
Suppose that $A$ has a transversal on cells $(0,j_0),\dots,(m-1,j_{m-1})$.
Then since every symbol in $\{0,\dots,m-1\}$ appears in the transversal it
follows that
\[
\sum_{i=0}^{m-1}\Delta(i,j_i)\=\sum_{i=0}^{m-1}i-\sum_{i=0}^{m-1}i\=0\mod m.
\]
However, this congruence is impossible to satisfy, given that 
$\Delta(i,j)=0$ for $j\le m-2$ and $\Delta(i,j)\=1\mod m$ for 
$j>m-2$, so that
\[
\sum_{i=0}^{m-1}\Delta(i,j_i)\=|\{i:j_i>m-2\}|\not\=0\mod m.\qedhere
\]
\end{proof}

The $m = n-1$ case of \Cref{t:Drisko} is a direct counterexample to
\cref{conj:stein} parts (3), (5), (6) and (7). Also Stein and
Szab\'o~\cite{SS06} gave counterexamples to part (2) when $n\in\{5,6\}$.
Furthermore, Pokrovskiy and Sudakov~\cite{PS19} recently gave a
constructive proof of the following.

\begin{theorem}\label{th:stein-wrong}
For all sufficiently large $n$ there exists an equi-$n$-square that
does not have a partial transversal of length more than
$n-\frac{1}{42}\log n$.
\end{theorem}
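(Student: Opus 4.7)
The plan is to iterate the modular obstruction underlying Drisko's construction (\Cref{t:Drisko}) via a substitution or blow-up operation, so that each iteration forces any partial transversal to miss one additional entry. Drisko's argument isolates a cell labelling $\Delta(i,j)\in\Z/m\Z$ with the property that the sum of $\Delta$-values along any transversal is simultaneously forced to lie in two disjoint residue classes. My first step is to recast this as a \emph{gadget}: an equi-$N$-square $B$ equipped with a labelling $\Delta\colon[N]\times[N]\to\Z/M\Z$ and a target set $T\subset\Z/M\Z$ such that any diagonal of $B$ of weight $N$ has $\Delta$-sum outside $T$, while the symbol-induced sum always lies in $T$.

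Next I would define a substitution $B\mapsto B'$ taking a gadget of order $N$ with deficit $k$ (i.e.\ no partial transversal of length more than $N-k$) to one of order $cN$ with deficit $k+1$, where $c$ is an absolute constant. Structurally, $B'$ is obtained by replacing each cell of $B$ with a $c\times c$ Drisko-type block, coupling the block-symbols so that each symbol of $B'$ appears exactly $cN$ times. The two labellings combine (say, in $\Z/M\Z\oplus\Z/m\Z$) into a labelling for $B'$: the inner Drisko structure obstructs configurations that fully transverse every inner block, while the outer gadget structure handles those that skip one or more blocks. Iterating this substitution $\ell$ times from a base gadget of order $N_0$ with deficit $1$ produces an equi-$n$-square of order $n=c^\ell N_0$ admitting no partial transversal of length greater than $n-\ell$. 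Taking $\ell=\lfloor\log n/\log c\rfloor$ and, when $n$ is not of this special form, padding up to the nearest admissible value by appending Latin blocks that cannot lengthen the worst partial transversal, yields the bound $n-\tfrac{1}{42}\log n$ provided one can arrange $\log_2 c\le 42$.

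The main obstacle I anticipate is promoting the obstruction from full transversals to partial ones. Drisko's sum identity relies on every row and every symbol being hit exactly once; a partial transversal that skips a few entries can in principle tune its $\Delta$-sum to avoid the forbidden residues. The substitution must therefore be designed so that any candidate partial transversal in $B'$ either (i) induces a complete transversal inside every inner block, which triggers Drisko's modular obstruction, or (ii) skips some inner block entirely, which reduces the problem to a strictly shorter partial transversal in $B$ and hence invokes the inductive deficit hypothesis. A secondary but genuinely delicate point is maintaining the exact multiplicity condition (each symbol occurring exactly $n$ times) through the iteration: this will require choosing the block-symbol coupling according to a carefully engineered outer Latin structure so that the multiplicities telescope cleanly as $\ell$ grows, and it is this bookkeeping, together with the allowed magnitude of $c$, that ultimately fixes the constant $1/42$.
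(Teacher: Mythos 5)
This statement is not proved in the paper at all: it is quoted from Pokrovskiy and Sudakov \cite{PS19}, so there is no in-paper argument to compare against. Judged on its own terms, your proposal is a plan rather than a proof, and the plan has a genuine gap at exactly the step you flag as the ``main obstacle.'' The dichotomy you propose for analysing a candidate partial transversal in $B'$ --- either it induces a complete transversal inside every inner block (so the Drisko modular obstruction fires) or it skips some inner block entirely (so induction applies) --- does not cover the generic case: a long partial transversal can meet many inner blocks in a proper, nonempty subset of their rows and columns, fully traversing none and skipping none. In that regime the $\Delta$-sum identity from \Cref{t:Drisko} gives no contradiction, because that identity needs every row and every symbol of the block to be hit exactly once, and the inductive deficit hypothesis is not triggered either. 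Closing this case is the real content of the Pokrovskiy--Sudakov argument; their recursive construction and the accompanying counting argument are engineered precisely so that a partial transversal missing fewer than one extra entry per level is forced into a contradiction, and none of that machinery is present in your sketch. The secondary issues you mention (preserving the exact equi-$n$ multiplicity through the iteration, handling orders $n$ not of the form $c^\ell N_0$, and pinning down the constant $1/42$) are also left entirely open, but the two-case analysis is the step that would fail as written.

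The architecture you describe --- iterate a Drisko-type gadget with a constant blow-up factor per unit of deficit, yielding deficit $\Theta(\log n)$ --- is indeed the right shape and matches the cited construction in spirit. But identifying the correct skeleton is not the same as supplying the inductive lemma that makes it work, and here that lemma is the whole theorem.
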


This is a counterexample to \cref{conj:stein}(1). Moreover, 
Pokrovskiy and Sudakov showed how their result can be extended
to give further counterexamples to statements like
\cref{conj:stein}(2) as well. For some $n>e^{84}$, use
\cref{th:stein-wrong} to construct an equi-$n$-square $E$ of order $n$
with no partial transversal of length $n-2$. Now, form a matrix $A$ of
order $n+1$ by adding one row and one column to $E$, where the new row
and column contain $2n+1$ distinct symbols that do not appear in $E$.
Since at most two of these new symbols can be used in any partial
transversal, $A$ does not have a transversal.  By a similar argument,
we can pad an appropriate equi-$n$-square with either new symbols
or the original symbols to provide counterexamples to 
parts (3), (5), (6) and (7) that are of a different nature to the
counterexamples provided by \tref{t:Drisko}. Interestingly,
\cite{PS13} shows that almost all equi-$n$-squares have a transversal,
so these counterexamples may be viewed as atypical.

The only one of Stein's conjectures that remains unsolved is
(4). While we are quite unsure about this conjecture, it seems much
more promising if we also enforce the array to be column-Latin.

\begin{conjecture}\label{conj:brualdi-rectangle}
  Let $R$ be an $(n-1) \times n$ Latin array. Then $R$ contains a
  transversal.
\end{conjecture}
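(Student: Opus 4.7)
The plan is a two-stage argument: first extend $R$ to an $n\times n$ Latin square $L$, then extract a transversal of $R$ from a suitable partial transversal of $L$.

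Suppose first that $R$ uses exactly $n$ distinct symbols. Since each symbol appears at most once per row and at most once per column, it appears at most $n-1$ times; but the $n(n-1)$ cells are split among $n$ symbols, so in fact each symbol appears exactly $n-1$ times and is therefore missing from precisely one column. The map ``symbol $\mapsto$ missing column'' is then a bijection, and appending its inverse as a new row produces an $n\times n$ Latin square $L$ extending $R$. Any transversal of $L$ restricts to a transversal of $R$ by deleting the added-row cell. Should $L$ have no transversal, one would fall back on \Cref{conj:brualdi}, which supplies a near transversal $T$ of $L$; but $T$ restricts to a transversal of $R$ only when $T$ happens to avoid the added row.

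When $R$ uses more than $n$ symbols the extension step fails outright and some preprocessing would be required. One idea is to repeatedly merge two symbols $a,b$ whose cells together form a partial matching of the row--column bipartite graph (no shared row, no shared column); each such merge preserves the Latin property while decreasing the symbol count by one. The hope would be to reach the $n$-symbol case, but it is not obvious that a mergeable pair always exists.

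The main obstacle is that ``a near transversal of $L$ avoiding a prescribed row'' is essentially a reformulation of the conjecture itself, so \Cref{conj:brualdi} cannot close the argument on its own. A more promising alternative is to attack $R$ directly, viewing it as a properly edge-coloured bipartite graph $K_{n-1,n}$ whose colour classes are matchings, and seeking a rainbow matching of size $n-1$. Augmenting-path or switching arguments in the spirit of Drisko's proof of \Cref{t:Drisko}, or permanental and algebraic techniques, are natural candidates; \Cref{t:Drisko} itself shows that the column-Latin hypothesis must be used in an essential way, so any successful argument must exploit it beyond the purely row-Latin structure.
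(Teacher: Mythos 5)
You have set out to prove a statement that the paper itself presents only as a conjecture: \Cref{conj:brualdi-rectangle} is a strengthening of the one surviving part (4) of Stein's conjectures, and the paper offers no proof of it, so there is no argument of the authors' to compare yours against. Your attempt, as you yourself concede, does not close the gap either, and the gap is not a technicality. Your first stage (completing $R$ to a Latin square $L$ when exactly $n$ symbols occur) is correct and standard, but the second stage is circular: since $L$ may have no transversal \cite{CW17}, you need a near transversal of $L$ that avoids the appended row, and the paper explicitly points out that \Cref{conj:brualdi-rectangle} is precisely the statement that one may prescribe the omitted row in \Cref{conj:brualdi}. Invoking \Cref{conj:brualdi} (itself only established here for $n\le 11$) cannot yield the row-avoiding version, so the reduction goes in the wrong logical direction.

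The remaining pieces also do not hold together. For arrays with more than $n$ symbols, the merging step is unsupported: while a transversal of the merged array does pull back to one of $R$, you give no reason why two symbols whose cells form a partial matching must exist, and repeated merging must moreover land on an array that is still an $(n-1)\times n$ Latin array on exactly $n$ symbols, which is not arranged for. (For arrays with very many distinct symbols one could instead try to apply results such as \cite{BHWWW18}, but those require on the order of $n^2$ symbols.) Finally, your reading of \Cref{t:Drisko} is reversed: Drisko's $(n-1)\times n$ example is column-Latin but badly fails to be row-Latin, so it shows that column-Latinness alone does not force a transversal and hence that the \emph{row}-Latin hypothesis must be exploited; it does not show that the column-Latin hypothesis is essential (whether row-Latinness alone suffices is exactly Stein's open conjecture (4)). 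In short, the proposal is a collection of reasonable first thoughts, but no step of it advances beyond restating the conjecture or an equivalent open problem.
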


Note that this conjecture implies a strengthened form of
\cjref{conj:brualdi}, where we may choose which row (or column or
symbol) is not included in our partial transversal. It is an open
question (see~\cite{BW14}) whether an even stronger property holds
when $n$ is large: it may be that all large $(n-1) \times n$ Latin
arrays on $n$ symbols have a decomposition into transversals. Often
such arrays are called \emph{Latin rectangles}. Interestingly, it is
not possible to relax the requirement on the number of symbols. We
know there are many Latin squares without transversals \cite{CW17}. If
a column of previously unused symbols is appended to such a Latin
square we create a Latin array with no decomposition into
transversals. Alternatively, if we remove the first two rows and the
first column from the Cayley table of an elementary abelian group of
order $2^k>2$ then we get a $(2^k-2)\times(2^k-1)$ Latin array on
$2^k$ symbols, with no decomposition into transversals. This follows
from a result of Akbari and Alireza~\cite{AA04}. Nevertheless, 
Latin arrays that are in some sense far from being Latin squares
are known to have many transversals.
Montgomery \etal~\cite{MPS20} recently showed that an $n\times n$
Latin array in which at most $(1-o(1))n$ symbols occur more than
$(1-o(1))n$ times has $(1-o(1))n$ pairwise disjoint transversals.

A partial transversal is \emph{maximal} if it is not contained in any
longer partial transversal. It is not hard to see that a maximal
partial transversal of length $\ell$ in a Latin square of order $n$
must satisfy $n/2\le\ell\le n$. In \cite{BMSW19} it was shown that for
$n\ge5$ all values of $\ell$ in this range are achieved.  Then Evans
\cite{Eva19} constructed an infinite family of Latin squares which
simultaneously have maximal partial transversals of each of the
permissible lengths. Subsequently, Evans \etal~\cite{EMW20}
showed that there exists a Latin square of order $n$ which
has maximal partial transversals of each permissible length if and only if
$n\notin\{3,4\}$ and $n \not\equiv 2 \pmod 4$.

In Latin squares, it is easy to find a partial transversal of length
$\lceil n/2 \rceil$ using a greedy algorithm. A succession of results
have progressively improved on this observation; see \cite{transurvey}
for a history. The three most important breakthroughs have been
the following:

In 1978, Brouwer \etal~\cite{Brouwer78} and
Woolbright~\cite{Woolbright78} independently achieved the first lower
bound that is asymptotically equal to $n$:

\begin{theorem}\label{thm:n-sqrt-n}
Every Latin square of order $n$ contains a partial transversal of
length at least $n-\sqrt{n}$.
\end{theorem}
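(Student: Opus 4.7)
My plan is to derive a contradiction from the assumption that a Latin square $L$ of order $n$ admits no partial transversal longer than $\ell := n - k$ with $k > \sqrt{n}$. Fix a maximum partial transversal $T$ of length $\ell$ and let $R, C, S$ be the sets of rows, columns, and symbols missed by $T$, each of size $k$. Maximality forbids any cell of $R \times C$ from carrying a symbol of $S$, for such a cell would extend $T$; hence the $k \times k$ subarray $A := L[R, C]$ uses only the $\ell$ symbols of $T$, and for every $(r, s) \in R \times S$ the unique column $c(r, s)$ where $s$ occurs in row $r$ lies in $X := \{0, \dots, n-1\} \setminus C$. This gives exactly $k^2$ cells in $R \times X$ whose symbol belongs to $S$.

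I would then extract structural constraints from local swaps. For each such cell $(r, c, s)$, let $(r_c, c, s_c) \in T$ be the entry of $T$ in column $c$. Then $T' := (T \setminus \{(r_c, c, s_c)\}) \cup \{(r, c, s)\}$ is a partial transversal of length $\ell$ with unused rows $(R \setminus \{r\}) \cup \{r_c\}$, columns $C$, and symbols $(S \setminus \{s\}) \cup \{s_c\}$. Since $T$ is maximum, $T'$ admits no augmentation; a case split on the possible augmenting triple $(r^*, c^*, s^*)$ shows that $s_c$ avoids $(R \setminus \{r\}) \times C$ and that row $r_c$ avoids $S \setminus \{s\}$ on its $C$-entries. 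When a column $c \in X$ hosts at least two cells of $R$ carrying $S$-symbols, intersecting the constraints from two such swaps forces $s_c$ to be absent from the whole of $A$ and row $r_c$ to avoid $S$ entirely on $C$.

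For the counting, set $m_c := |\{r \in R : L(r, c) \in S\}|$ and $H := \{c \in X : m_c \geq 2\}$, with $h := |H|$. Summing column contributions gives $\sum_{c \in X} m_c = k^2$, yielding $k^2 \leq kh + (\ell - h)$. Meanwhile the $h$ forbidden symbols $\{s_c : c \in H\}$ are absent from $A$, so $A$'s $k^2$ cells are filled by at most $\ell - h$ symbols, each occurring at most $k$ times, giving $k^2 \leq k(\ell - h)$. A single pass thus closes with $\ell \geq (2n-1)/3$, already beyond the trivial $\ell \geq n/2$ but short of the goal. To reach $\ell \geq n - \sqrt{n}$, I would iterate the swap construction on $T'$, chaining swaps along a carefully chosen alternating sequence of cells so that each new swap imposes a fresh Latin-type restriction, eventually forcing $k^2 \leq \ell$, equivalently $k(k+1) \leq n$, and contradicting $k > \sqrt{n}$.

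The main obstacle lies in this chaining step. A single swap alone falls well short of the target, so one must organise an iterative procedure, in the spirit of Brouwer and Woolbright, that continues to extract genuinely new constraints at each step rather than rediscovering old ones. This requires a careful choice of swap sequence and tight bookkeeping of the momentarily unused rows, columns, and symbols, so that the accumulated restrictions combine cleanly into a quadratic inequality in $k$ and $\ell$ that tightens all the way from $\ell \geq (2n-1)/3$ to $\ell \geq n - \sqrt{n}$.
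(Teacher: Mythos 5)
First, note that the paper does not prove this statement at all: \cref{thm:n-sqrt-n} is quoted as a known result of Brouwer, de~Vries and Wieringa and of Woolbright, so there is no internal proof to compare against. Judged on its own terms, your set-up is the correct one (it is exactly the classical swapping framework of those papers), and the single-swap analysis you give is sound: the $k^2$ cells of $R\times X$ carrying $S$-symbols, the exchange $T'=(T\setminus\{(r_c,c,s_c)\})\cup\{(r,c,s)\}$, the conclusion that for $c\in H$ the symbol $s_c$ is absent from the whole of $A$, and the two counting inequalities $k^2\le kh+(\ell-h)$ and $k^2\le k(\ell-h)$ do combine to give $\ell\ge(2n-1)/3$.

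However, there is a genuine gap, and you have located it yourself: everything from ``I would iterate the swap construction'' onward is an announcement rather than an argument. The entire difficulty of the theorem lives in that step. After one swap the unused symbol set changes from $S$ to $(S\setminus\{s\})\cup\{s_c\}$, so the ``fresh'' constraints extracted at stage $i$ are constraints on a \emph{moving} triple of unused rows, columns and symbols; the danger is precisely that a later swap re-derives a restriction already implied by an earlier one, or that the forbidden symbols $s_c$ collected at different stages collide. Woolbright's proof handles this by constructing a specific nested sequence of column sets $C=C_1\supset\dots$ (equivalently, disjoint batches of at least $k, k-1, k-2,\dots$ ``used-up'' columns), whose disjointness is what converts the bookkeeping into $k+(k-1)+\dots+1\le \ell$, i.e.\ $k(k+1)\le 2\ell$-type bounds that then tighten to $k(k+1)\le n$. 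You assert that a ``carefully chosen alternating sequence'' with ``tight bookkeeping'' will force $k^2\le\ell$, but you give neither the choice nor the invariant that guarantees the constraints accumulate disjointly. Since the single-swap bound $(2n-1)/3$ is far weaker than $n-\sqrt n$, the proposal as written does not establish the theorem; it establishes only the first iteration of a proof whose inductive engine is missing.
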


Correcting an earlier flawed proof, in 2008 Shor and
Hatami~\cite{HatamiShor08} improved the deficit to $O(\log^2 n)$:

\begin{theorem}\label{th:n-log2}
  Every Latin square of order $n$ contains a partial transversal of
  length at least $n-11.053\log^2n$.
\end{theorem}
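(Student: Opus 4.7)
The plan is to argue by contradiction using a probabilistic augmentation process. Assume $L$ is a Latin square of order $n$ whose longest partial transversal $T$ has size $\ell = n-k$ for some large $k$, and let $R$, $C$, $S$ denote the sets of rows, columns and symbols missed by $T$ (each of size $k$). The basic operation I want to iterate is a \emph{swap step}: pick a cell $(r,c)$ with $r\in R$ whose symbol $s=L(r,c)$ is used by some entry $(r',c',s)\in T$, then replace that entry by $(r,c,s)$. This keeps $|T|$ fixed but frees the row $r'$, so the triple $(R,C,S)$ evolves. An honest augmentation of $T$ occurs exactly when the walk lands on a cell $(r,c)$ with $r\in R$, $c\in C$ and $L(r,c)\in S$, contradicting maximality.

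First, I would package the swap graph into an auxiliary tripartite structure on vertex set $R\cup C\cup S$ whose hyperedges encode the entries of $L$ relevant to swap steps, and reformulate the goal as finding a suitable augmenting configuration inside this structure. The target is to show that such a configuration must exist whenever $k\ge c\log^2 n$ for a constant $c$ to be optimised. The starting point is the Brouwer--Woolbright bound (\Cref{thm:n-sqrt-n}), which already lets us assume $k=O(\sqrt{n})$.

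Next I would define a randomised sequential process that attempts to construct such a configuration, starting from a uniformly random entry in a row of $R$ and making random branching choices at each swap step. After $t$ steps let $X_t$ record a ``progress'' statistic of the walk (for instance the number of distinct rows, columns and symbols touched so far) and track it as a (super)martingale. An Azuma--Hoeffding type concentration bound should then show that, with positive probability, the walk terminates in a genuine augmentation within $O(\log n)$ swap steps, provided $k$ exceeds a suitable constant multiple of $\log^2 n$. Optimising the trade-off between deviation probability and walk length is what pins down the explicit constant $11.053$.

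The main obstacle, and the source of the flaw in Shor's original attempt, is controlling the dependencies in the random walk: because swap steps are constrained by the fixed array $L$, the transition probabilities at later steps are correlated with the history. The Hatami--Shor correction introduces an additional re-randomisation at each step (a uniform random choice among ``equivalent'' swap options) that restores enough conditional independence for the martingale concentration argument to apply. Making this re-randomisation and the ensuing tail-probability calculation precise, and then checking that the resulting estimate on $k$ really is $11.053\log^2 n$ rather than some weaker polylogarithmic bound, is the delicate technical heart of the proof.
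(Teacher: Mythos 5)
The paper does not prove this theorem itself---it is quoted from Hatami and Shor \cite{HatamiShor08}---but the method behind it is laid out in Sections~\ref{s:brualdi} and~\ref{s:nearnear}: it is the deterministic $\#$-swapping argument. One starts from a diagonal of maximum weight $n-k$, notes that $\#$-swaps never decrease the weight, and uses the impossibility of increasing it to force a growing collection of cells to contain symbols already duplicated on the diagonal. Counting these forced cells yields the recursive inequalities recorded in \cref{lem:shor-eq} for the threshold sequence $n_k$ (the smallest order at which a weight-$(n-k)$ diagonal can fail to improve), and analysing \cref{eq:shor3} gives $k \le 11.053\log^2 n_k$, which is the theorem.

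Your proposal takes an entirely different route---a randomised augmenting walk analysed by martingale concentration---and as written it has genuine gaps. Nothing in the sketch actually produces a bound: the auxiliary tripartite structure, the progress statistic $X_t$, the bounded-difference estimates, and the claimed $O(\log n)$ termination are all left unspecified, and no reason is offered that optimising an Azuma-type tail bound would land on the constant $11.053$; that constant is an artefact of solving the deterministic recursion \cref{eq:shor3}, not of a deviation-versus-walk-length trade-off, so asserting that the optimisation ``pins down'' it is not a proof step but a hope. Your historical framing is also wrong in a way that matters: Shor's original argument was not probabilistic, and its flaw was not about dependencies in a random process---it was an error in the combinatorial counting underlying the recursion, which Hatami and Shor repaired at the cost of roughly doubling the constant. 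If you want to prove this theorem, the workable path is the one the paper describes: show that a maximum-weight diagonal, under repeated $\#$-swapping, forces many cells in the missed rows and columns to carry duplicated symbols; convert that into the inequalities of \cref{lem:shor-eq}; and then carry out the elementary but delicate analysis of the resulting recursion. A probabilistic augmentation argument in the spirit you describe is closer to the much later work of Keevash, Pokrovskiy, Sudakov and Yepremyan \cite{KPSY} (\cref{th:n-log}), and even there the machinery required is far heavier than an Azuma bound on a single walk.
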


Finally, very recently Keevash \etal~\cite{KPSY} improved the error
term once again.

\begin{theorem}\label{th:n-log}
  Every Latin square of order $n$ contains a partial transversal of
  length at least $n-O(\log n/\log\log n)$.
\end{theorem}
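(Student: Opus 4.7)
The plan is to follow the modern template for near-optimal rainbow matching results: combine an absorption argument with an iterated semi-random (Rödl nibble) process. Reformulate the Latin square $L$ of order $n$ as a tripartite $3$-uniform hypergraph $H$ on vertex set $R\sqcup C\sqcup S$ (rows, columns, symbols), where each entry $(r,c,s)$ of $L$ yields a hyperedge $\{r,c,s\}$. Every vertex has degree exactly $n$, every codegree is $1$, and a partial transversal of length $\ell$ corresponds to a matching of size $\ell$ in $H$. The goal is therefore to find a matching in $H$ that leaves only $O(\log n/\log\log n)$ vertices uncovered.

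First I would randomly reserve a small absorber $A\subseteq E(H)$, with each hyperedge included independently with some small probability $p$. The aim is that $A$ satisfies the following with high probability: for every balanced set $T$ of at most $O(\log n/\log\log n)$ vertices there is a matching in $A$ whose vertex set is exactly $V(A)\cup T$. Such absorbers are assembled from small rainbow ``gadgets'' that allow any prescribed triple to be swapped in or out, and a union bound over the $\binom{3n}{O(\log n/\log\log n)}$ possible residues shows that a suitable random $A$ works. Next, apply a Rödl-type nibble to $H\setminus A$ to produce a matching $M_0$ covering all but a small set $U$ of vertices. A single nibble only reaches $\operatorname{polylog}(n)$ leftovers, essentially recovering \tref{th:n-log2}; to push further, one iterates the nibble on the auxiliary hypergraph of currently uncovered vertices, exploiting the fact that pseudorandomness (regularity of degrees and smallness of codegrees) is preserved in each round. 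Roughly $\log\log n$ iterations then gain the desired extra factor, after which the absorber is deployed on $U$ to extend $M_0$ to a matching of size $n-O(\log n/\log\log n)$.

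The main obstacle is the iterated nibble. Each round erodes control of codegrees and of typical vertex-degrees, and the concentration tools (Azuma, Talagrand, or a differential-equation tracking method) must be pushed to survive $\log\log n$ cascaded rounds; if any single round fails to preserve the quantitative pseudorandomness needed for the next, the whole scheme collapses and one falls back to the $\operatorname{polylog}$ regime. The absorber must then be calibrated with matching precision: too small an $A$ cannot handle $U$, but too large an $A$ removes enough edges to degrade the nibble analysis. Reconciling these two tensions is where the deepest technical work of the Keevash--Pokrovskiy--Sudakov--Yepremyan argument lies, and it is the step that I would expect to spend almost all of the effort on.
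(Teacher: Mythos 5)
First, a point of comparison: the paper does not prove this statement at all. \cref{th:n-log} is quoted from Keevash, Pokrovskiy, Sudakov and Yepremyan \cite{KPSY}, so there is no in-paper proof to measure yours against. Judged on its own terms, your proposal is a research programme rather than a proof: the two components carrying all of the difficulty --- the construction of the absorber and the survival of quantitative pseudorandomness through $\log\log n$ cascaded nibble rounds --- are precisely the steps you defer, and you say so yourself. That alone leaves the argument incomplete.

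More seriously, the absorption step as you have formulated it cannot work. In the tripartite hypergraph $H$, a matching of size $m$ covers exactly $m$ vertices in each of $R$, $C$ and $S$, so the leftover set $U$ after the nibble is automatically balanced. If your absorber $A$ really had the property that every balanced set $T$ of size $O(\log n/\log\log n)$ can be absorbed into a matching with vertex set exactly $V(A)\cup T$, then applying it to $T=U$ would upgrade $M_0$ to a perfect matching of $H$, that is, a full transversal of $L$. But there are many Latin squares with no transversal (for instance Cayley tables of $\Z_{2m}$; see \cite{CW17}, cited in the introduction), so no absorber with that property can exist in general. Any correct absorption scheme here could only shrink the leftover, not eliminate it, and quantifying how far it can be shrunk is exactly the hard question that the absorber framing does not answer; the argument of \cite{KPSY} instead refines the augmenting/$\#$-swap machinery behind \cref{thm:n-sqrt-n} and \cref{th:n-log2} rather than using absorption. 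Two further quantitative claims would also need justification: a single nibble on a linear hypergraph of degree $n$ does not by itself reach $\operatorname{polylog}(n)$ uncovered vertices, and your union bound ranges over $n^{\Theta(\log n/\log\log n)}$ residual sets, so the per-set failure probability of the absorber would have to be smaller than $e^{-c\log^2 n/\log\log n}$, which is not established.
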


The goal of this paper is to provide, for $k\le20$, an improved lower
bound on the order $n$ of any Latin array that lacks a partial
transversal of length $n-k$. The case $k=1$ is handled in the next
section, and the case $2\le k\le20$ is addressed in the last section.
Our results rely heavily on computation. Each computation was verified
by at least two independent programs.  Preliminary versions of 
our results were given in the PhD theses of the first two authors
\cite{darcy-thesis,Pula11}.

\section{\label{s:brualdi}Near transversals}

In this section we describe our approach to proving that Latin arrays
of order $n\le11$ have a near transversal. Our method is based on the work
of Shor and Hatami~\cite{HatamiShor08}.
Until the recent breakthrough by Keevash \etal~\cite{KPSY},
\cref{th:n-log2} was the state of the art.
The key idea needed to prove it was the idea of
$\#$-swapping. Consider a diagonal, $T$, of weight $w$. Choose two
entries from $T$, say $(i_0,j_0,k_0)$ and $(i_1,j_1,k_1)$. If
$T\setminus\{(i_0,j_0,k_0),(i_1,j_1,k_1)\}$ still covers $w$ symbols,
then we consider the diagonal
$$\big(T\setminus\{(i_0,j_0,k_0),(i_1,j_1,k_1)\}\big)\cup\big\{(i_0,j_1,\unkn),(i_1,j_0,\unkn)\big\},$$
where we adopt the convention of using $\unkn$ to denote an unknown
symbol (possibly a different symbol each time the notation is used).
This diagonal is guaranteed to have a weight of $w$, $w+1$ or
$w+2$. The act of swapping $\{(i_0,j_0,k_0),(i_1,j_1,k_1)\}$ for
$\{(i_0,j_1,\unkn),(i_1,j_0,\unkn)\}$ to obtain a new diagonal is
called a $\#$-swap. Note that by repeated use of $\#$-swaps, the
weight of the diagonal can never decrease. Thus, if we start with a
diagonal of maximum weight, it is impossible to $\#$-swap to a
diagonal of larger weight and the set of symbols on each diagonal that
we reach by $\#$-swapping will be the same.

Throughout the remainder of the section, we use the symbol \x to
indicate a cell that must contain a symbol that appears on the
original diagonal. For example, if that cell is reachable via a
sequence of $\#$-swaps and the original diagonal has maximum weight,
then the symbol in the cell must appear somewhere on the original
diagonal. In contrast, if a cell is shown as empty, it means that we
do not know anything about it.

\begin{example}
  Here is an example of $\#$-swapping on a diagonal of weight $4 =
  6-2$. If we remove the top left $0$ and $1$ from the diagonal, we
  still have $4$ symbols left, so we may $\#$-swap on these entries
  and instead consider the diagonal which contains the two \x's, with
  the bottom four rows unchanged.
  
  \begin{center}
    \begin{tikzpicture}
      \begin{scope}
        \matrix (A) [square matrix]{
          |[fill=trans]|0 & ~ & ~ & ~ & ~ & ~ \\
          ~ & |[fill=trans]|1 & ~ & ~ & ~ & ~ \\
          ~ & ~  & |[fill=trans]|0 & ~  & ~  & ~ \\
          ~ & ~  & ~  & |[fill=trans]|1 & ~  & ~  \\
          ~ & ~  & ~& ~ & |[fill=trans]|2 & ~ \\
          ~ & ~  & ~&~& ~ & |[fill=trans]|3 \\
        };
        \draw[ultra thick] (A-1-1.north west) rectangle (A-6-6.south east);
      \end{scope}

      \draw[->] (2,0) -- node[text width=2.5cm,midway,above,align=center]{\#-swap $(0,0,0)$ and $(1,1,1)$} (4,0);
      
      \begin{scope}[xshift=6cm]
        \matrix (A) [square matrix]{
          |[fill=trans2]|0 & |[fill=trans]|\x & ~ & ~ & ~ & ~ \\
          |[fill=trans]|\x & |[fill=trans2]|1 & ~ & ~ & ~ & ~ \\
          ~ & ~  & |[fill=trans]|0 & ~  & ~  & ~ \\
          ~ & ~  & ~  & |[fill=trans]|1 & ~  & ~  \\
          ~ & ~  & ~& ~ & |[fill=trans]|2 & ~ \\
          ~ & ~  & ~&~& ~ & |[fill=trans]|3 \\
        };
        \draw[ultra thick] (A-1-1.north west) rectangle (A-6-6.south east);
      \end{scope}
    \end{tikzpicture}
  \end{center}
\end{example}

Note that after performing a $\#$-swap, the two cells that were swapped
out will be lightly shaded for further clarity.

The primary purpose of this paper is to describe a proof of a
generalisation of \cjref{conj:brualdi} for small orders. We try to
find near transversals in all Latin arrays rather than just Latin
squares. We focus on diagonals of weight $n-2$ and attempt to uncover
a new symbol, which would locate a near transversal. The following
elementary observation is needed throughout.

\begin{lemma}\label{l:extend}
  If every Latin array of order $n$ contains a partial transversal of
  length $k$, then every Latin array of order $n+1$ contains a partial
  transversal of length $k$.
\end{lemma}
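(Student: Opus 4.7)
The plan is straightforward: given any Latin array $L$ of order $n+1$, I will produce a Latin sub-array of order $n$ to which the hypothesis applies, and then lift the resulting partial transversal back up to $L$.

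Concretely, pick any row $r$ and any column $c$ of $L$ and delete them, obtaining an $n\times n$ array $L'$. The key observation is that $L'$ is still Latin: deleting rows and columns can never introduce a repeated symbol within a row or within a column, so both the row-Latin and column-Latin properties are inherited. By the hypothesis applied to $L'$, there is a partial transversal $P'$ of length $k$ in $L'$, consisting of $k$ entries from pairwise distinct rows, pairwise distinct columns, and with pairwise distinct symbols.

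Now interpret each entry of $P'$ as an entry of the original array $L$ (the cells of $L'$ are literally cells of $L$, just with different row/column indices). The $k$ cells still lie in $k$ distinct rows of $L$ (none equal to $r$) and $k$ distinct columns of $L$ (none equal to $c$), and the symbols are unchanged, hence still distinct. Therefore $P'$ gives a partial transversal of length $k$ in $L$, completing the proof. There is no real obstacle here; the only thing worth noting is that being Latin, unlike being a Latin \emph{square}, has no fixed-symbol-set requirement, so restricting to a sub-array preserves the property without any bookkeeping.
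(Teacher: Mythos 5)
Your proof is correct and is exactly the elementary argument intended: the paper states this lemma without proof, introducing it only as an ``elementary observation''. Deleting any one row and any one column of the order-$(n{+}1)$ array clearly preserves the Latin property (since, as you note, being Latin imposes no fixed symbol set), and the resulting length-$k$ partial transversal lifts back to the original array unchanged.
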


Throughout the section, we utilise \lref{l:extend} iteratively.
Having shown that all Latin arrays of order $n-1$ contain a near
transversal, we will then know that all Latin arrays of order $n$
contain a diagonal of weight at least $n-2$. A diagonal of weight
$n-2$ has two essentially different configurations for the duplicated
symbols as shown in the following pictures:
\begin{center}
  \begin{tikzpicture}
    \begin{scope}
      \matrix (A) [square matrix]{
        |[fill=trans]|0 & ~ & ~ & ~ & ~ & ~\\
        ~ & |[fill=trans]|0 & ~ & ~ & ~ & ~\\
        ~ & ~  & |[fill=trans]|1 & ~  & ~  & ~\\
        ~ & ~  & ~  & |[fill=trans]|1 & ~  & ~ \\
        ~ & ~  & ~& ~ & |[fill=trans]|2 & ~\\
        ~ & ~  & ~&~& ~ & |[fill=trans]|3 \\
      };
      \draw[ultra thick] (A-1-6.north east) -- (A-1-1.north west) -- (A-6-1.south west);
      \node at ([xshift=0.1cm]A.south east) {$\ddots$};

      \node at ([yshift=-10pt]A.south) {Type A};
    \end{scope}
    
    \begin{scope}[xshift=6cm]
      \matrix (A) [square matrix]{
        |[fill=trans]|0 & ~ & ~ & ~ & ~ & ~\\
        ~ & |[fill=trans]|0 & ~ & ~ & ~ & ~\\
        ~ & ~  & |[fill=trans]|0 & ~  & ~  & ~\\
        ~ & ~  & ~  & |[fill=trans]|1 & ~  & ~ \\
        ~ & ~  & ~& ~ & |[fill=trans]|2 & ~\\
        ~ & ~  & ~&~& ~ & |[fill=trans]|3 \\
      };
      \draw[ultra thick] (A-1-6.north east) -- (A-1-1.north west) -- (A-6-1.south west);
      \node at ([xshift=0.1cm]A.south east) {$\ddots$};
      
      \node at ([yshift=-10pt]A.south) {Type B};
    \end{scope}
  \end{tikzpicture}
\end{center}
A diagonal of type A has two symbols which each occur twice on the
diagonal, whilst a diagonal of type B has one symbol that occurs
thrice on the diagonal.  We first start by showing that the existence
of a type B diagonal implies the existence of a type A diagonal in
maximal cases.

\begin{lemma}\label{lem:weak-trans}
Let $L$ be a Latin array of order $n$ with a diagonal of weight $n-2$
and no diagonal of weight greater than $n-2$. If $L$ has a diagonal of
type B, then there exists a diagonal of type A that can be reached by
a sequence of $\#$-swaps.
\end{lemma}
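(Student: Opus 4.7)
The plan is to proceed by contradiction: assume no sequence of $\#$-swaps from the given type B diagonal $D$ reaches a type A diagonal, and iteratively apply the Latin-plus-maximum-weight dichotomy at each swap to force an incompatible number of entries in a single row of $L$.

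Label $D$ so that the tripled symbol $s$ sits at $C_i=(r_i,c_i)$ for $i=1,2,3$, and write the singleton symbols of $D$ as $t_3,\dots,t_{n-1}$. First, I would perform the $\#$-swap on $C_1$ and $C_2$. The two new cells $(r_1,c_2)$ and $(r_2,c_1)$ cannot carry $s$ by the row-Latin property, and by the maximum-weight hypothesis each must lie in $\{t_3,\dots,t_{n-1}\}$. If the two values differ, the new diagonal is type A, which finishes the proof; otherwise they are equal, forcing $L(r_1,c_2)=L(r_2,c_1)=t_a$. Applying the identical dichotomy to the swaps on $\{C_1,C_3\}$ and $\{C_2,C_3\}$, I may assume $L(r_1,c_3)=L(r_3,c_1)=t_b$ and $L(r_2,c_3)=L(r_3,c_2)=t_c$ with $t_a,t_b,t_c$ distinct. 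The swap on $C_1,C_2$ then leaves a new type B diagonal $D'$ with $t_a$ tripled at $(r_1,c_2)$, $(r_2,c_1)$, and the main-diagonal position $P_{t_a}$ of $t_a$.

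Next, I would iterate along row $r_2$: on $D'$, perform the $\#$-swap on $(r_2,c_1)$ and $P_{t_a}$. The same dichotomy yields either type A (done) or new forced equalities $L(r_2,c_{P_{t_a}})=L(r_{P_{t_a}},c_1)=t_{\pi(1)}$, where the maximum-weight and row-Latin constraints combine to force $t_{\pi(1)}$ to lie in $\{t_3,\dots,t_{n-1}\}\setminus\{t_a,t_c\}$. Repeating on the resulting type B diagonal, each subsequent iteration fills one new column of row $r_2$ with a fresh singleton symbol $t_{\pi(k)}$ that must avoid every symbol already occupying the forced entries of row $r_2$; the row-$r_2$ tripled cell moves to a new column at each step because the swap sends it to the original main-diagonal column of the previously tripled singleton, and that column has not been used in row $r_2$ before.

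The main obstacle will be setting up this iteration cleanly so that the pool of available values for $t_{\pi(k)}$ is tightly controlled; once that is done, the count is short. The forced entries of row $r_2$ must be distinct and all lie in $\{s,t_3,\dots,t_{n-1}\}$, a pool of size $n-2$, so after at most $n-4$ iterations row $r_2$ contains more distinct forced values than the pool allows. This contradicts the assumption that no type A diagonal is reachable, proving the lemma. (For the smallest cases $n\in\{4,5\}$ the three indices $a,b,c$ cannot even be chosen distinct from $\{3,\dots,n-1\}$, so some initial swap must already have produced type A.)
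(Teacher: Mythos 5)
Your proposal is correct and follows essentially the same route as the paper's proof: repeatedly $\#$-swap, use the maximum-weight hypothesis to force the two exposed cells to carry the \emph{same} diagonal symbol (else type A appears), accumulate distinct diagonal symbols in one fixed row, and derive a pigeonhole contradiction against the pool of $n-2$ symbols. The only differences are cosmetic --- the paper swaps $C_1$ with $C_3$ and accumulates along row $r_1$, reaching the contradiction after $n-2$ swaps, whereas you accumulate along $r_2$ and compute the (unused) value $t_b$ --- so no further comparison is needed.
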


\begin{proof}
Assume, on the contrary, that no diagonal of type A can be
reached. Without loss of generality, the initial diagonal of type B is
the main diagonal and the three repeated symbols are in the top 3
rows.
    
\begin{center}
  \begin{tikzpicture}
    \begin{scope}
      \matrix (A) [square matrix]{
        |[fill=trans]|0 & ~ & ~ & ~ & ~ & ~\\
        ~ & |[fill=trans]|0 & ~ & ~ & ~ & ~\\
        ~ & ~  & |[fill=trans]|0 & ~  & ~  & ~\\
        ~ & ~  & ~  & |[fill=trans]|1 & ~  & ~ \\
        ~ & ~  & ~& ~ & |[fill=trans]|2 & ~\\
        ~ & ~  & ~&~& ~ & |[fill=trans]|3 \\
      };
      \draw[ultra thick] (A-1-6.north east) -- (A-1-1.north west) -- (A-6-1.south west);

      \node at ([xshift=0.1cm]A.south east) {$\ddots$};
    \end{scope}
  \end{tikzpicture}
\end{center}

We need to perform $n-2$ $\#$-swaps to arrive at a contradiction. 
First, we $\#$-swap $(0,0,0)$ and $(2,2,0)$. The symbols in the cells
$(0,2)$ and $(2,0)$ must be the same, otherwise this new diagonal
would be of type A. Without loss of generality, these cells contain
the symbol 1. We now $\#$-swap $(0,2,1)$ and $(3,3,1)$. By a similar
argument, the two uncovered cells must contain the same symbols (which
is, without loss of generality, 2). We repeat this same argument $n-2$
times in total. On all steps $i$ (except the first one), we $\#$-swap
the entries $(0,i,i-1)$ and $(i+1,i+1,i-1)$ and expose the entries
$(0,i+1,i)$ and $(i+1,i,i)$. The first three steps are shown in
\cref{fig:steps-weak-trans}.

\begin{figure}[h]
  \centering
  \begin{tikzpicture}
    \begin{scope}
      \matrix (A) [square matrix]{
        |[fill=trans]|0 & ~ & ~ & ~ & ~ & ~\\
        ~ & |[fill=trans]|0 & ~ & ~ & ~ & ~\\
        ~ & ~  & |[fill=trans]|0 & ~  & ~  & ~\\
        ~ & ~  & ~  & |[fill=trans]|1 & ~  & ~ \\
        ~ & ~  & ~& ~ & |[fill=trans]|2 & ~\\
        ~ & ~  & ~&~& ~ & |[fill=trans]|3 \\
      };
      \draw[ultra thick] (A-1-6.north east) -- (A-1-1.north west) -- (A-6-1.south west);
      
      \node at ([xshift=0.1cm]A.south east) {$\ddots$};
    \end{scope}
    
    \begin{scope}[xshift=4cm]
      \matrix (A) [square matrix]{
        |[fill=trans2]|0 & ~ & |[fill=trans]|1 & ~ & ~ & ~\\
        ~ & |[fill=trans]|0 & ~ & ~ & ~ & ~\\
        |[fill=trans]|1 & ~  & |[fill=trans2]|0 & ~  & ~  & ~\\
        ~ & ~  & ~  & |[fill=trans]|1 & ~  & ~ \\
        ~ & ~  & ~& ~ & |[fill=trans]|2 & ~\\
        ~ & ~  & ~&~& ~ & |[fill=trans]|3 \\
      };
      \draw[ultra thick] (A-1-6.north east) -- (A-1-1.north west) -- (A-6-1.south west);

      \node at ([xshift=0.1cm]A.south east) {$\ddots$};
    \end{scope}
    
    \begin{scope}[xshift=8cm]
      \matrix (A) [square matrix]{
        0 & ~ & |[fill=trans2]|1 & |[fill=trans]|2 & ~ & ~\\
        ~ & |[fill=trans]|0 & ~ & ~ & ~ & ~\\
        |[fill=trans]|1 & ~  & 0 & ~  & ~  & ~\\
        ~ & ~  & |[fill=trans]|2 & |[fill=trans2]|1 & ~  & ~ \\
        ~ & ~  & ~& ~ & |[fill=trans]|2 & ~\\
        ~ & ~  & ~&~& ~ & |[fill=trans]|3 \\
      };
      \draw[ultra thick] (A-1-6.north east) -- (A-1-1.north west) -- (A-6-1.south west);

      \node at ([xshift=0.1cm]A.south east) {$\ddots$};
    \end{scope}
    
    \begin{scope}[xshift=12cm]
      \matrix (A) [square matrix]{
        0 & ~ & 1 & |[fill=trans2]|2 & |[fill=trans]|3 & ~\\
        ~ & |[fill=trans]|0 & ~ & ~ & ~ & ~\\
        |[fill=trans]|1 & ~  & 0 & ~  & ~  & ~\\
        ~ & ~  & |[fill=trans]|2  & 1 & ~  & ~ \\
        ~ & ~  & ~& |[fill=trans]|3 & |[fill=trans2]|2 & ~\\
        ~ & ~  & ~&~& ~ & |[fill=trans]|3 \\
      };
      \draw[ultra thick] (A-1-6.north east) -- (A-1-1.north west) -- (A-6-1.south west);

      \node at ([xshift=0.1cm]A.south east) {$\ddots$};
    \end{scope}
  \end{tikzpicture}
  \caption{\label{fig:steps-weak-trans}The first three $\#$-swaps in \cref{lem:weak-trans}.}
\end{figure}

However, at step $n-2$, the uncovered symbol must be some symbol that
did not appear on the original diagonal. Thus, we have found a heavier
diagonal, a contradiction.
\end{proof}

Our next result generalises \Cref{lem:weak-trans}, except that we
abandon the condition that we must be able to $\#$-swap to the new
diagonal. It also generalises \cite[Prop.7]{CW05}, whose proof it
mimics.  Note that \cite[Prop.7]{CW05} has been generalised in a
different direction (namely, to row-Latin arrays of order $n$
containing $n$ symbols) by Aharoni \etal~\cite{ABKZ18}.

\begin{lemma}\label{lem:partial-to-weak}
Any entry of a Latin array contained in a diagonal of weight $w$ is
contained in a diagonal of weight at least $w$ where each symbol
appears on the diagonal at most twice.
\end{lemma}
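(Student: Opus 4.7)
The plan is an extremal argument via 2-cell swaps on an over-represented symbol. Let $\mathcal{D}$ be the collection of diagonals of weight at least $w$ containing the entry $e$; by hypothesis $\mathcal{D} \neq \emptyset$. Choose $D^* \in \mathcal{D}$ maximising its weight and, subject to that, minimising the potential $\phi(D^*) := \sum_t m_t(D^*)^2$. I aim to show every symbol of $D^*$ appears at most twice.

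Suppose for contradiction some $s$ has $k := m_s(D^*) \geq 3$, with $s$-entries at $(r_1,c_1),\ldots,(r_k,c_k)$. At most one of these cells is the cell of $e$, so at least $\binom{k-1}{2} \geq 1$ unordered pairs $\{i,j\}$ avoid $e$. For any such pair, form a new diagonal $D'$ by removing $(r_i,c_i,s),(r_j,c_j,s)$ and inserting the entries the array has at $(r_i,c_j)$ and $(r_j,c_i)$. By the Latin property both new symbols $a, b$ differ from $s$. Since $s$ still has multiplicity $k-2 \geq 1$ on $D'$ and every other symbol of $D^*$ remains, $\mathrm{weight}(D') \geq \mathrm{weight}(D^*) \geq w$, so $D' \in \mathcal{D}$. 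By the weight tie-breaker, $\mathrm{weight}(D') = \mathrm{weight}(D^*)$, which forces both $a$ and $b$ already to be symbols of $D^*$ (else a new symbol would strictly raise the weight). Then minimality of $\phi$ gives $\phi(D') \geq \phi(D^*)$, i.e., via the direct calculation
\[
\phi(D') - \phi(D^*) \;=\; 2\bigl(m_a(D^*) + m_b(D^*)\bigr) + 6 - 4k \quad (a \neq b),
\]
one obtains $m_a(D^*) + m_b(D^*) \geq 2k-3$; an analogous inequality $m_a(D^*) \geq k-2$ holds when $a = b$.

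The main obstacle is to turn this family of inequalities --- one per admissible pair --- into a contradiction. Summing over all admissible pairs yields a lower bound of order $\binom{k}{2}(2k-3)$ on $\sum_{i \neq j} m_{\sigma(i,j)}(D^*)$, where $\sigma(i,j)$ is the symbol at $(r_i,c_j)$ in the $k \times k$ subarray spanned by rows $\{r_1,\ldots,r_k\}$ and columns $\{c_1,\ldots,c_k\}$. One then upper bounds the same sum by exploiting (i) the row- and column-Latinness of the subarray, which limits each non-$s$ symbol to at most $k$ off-diagonal cells; (ii) the global constraint $\sum_{t \neq s} m_t(D^*) = n-k$; and (iii) replaying the same swap argument on any symbol that itself threatens to be too popular (since it too would be constrained by the extremal choice of $D^*$). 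The degenerate case $a = b$ and the minor adjustment when $e$ itself sits in an $s$-cell (reducing the admissible pair count to $\binom{k-1}{2}$) are absorbed into the same counting, following the template of \cite[Prop.7]{CW05}.
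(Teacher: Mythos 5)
There is a genuine gap, and it is exactly the one you flag as ``the main obstacle.'' Your setup is fine: the weight computation for swapping two $s$-cells and the identity $\phi(D')-\phi(D^*)=2(m_a+m_b)+6-4k$ are both correct, and they do yield $m_a+m_b\ge 2k-3$ for every admissible pair. But these inequalities are not contradictory on their own, and the summing scheme you sketch does not close the argument: the lower bound over all admissible pairs is of order $k^3$, while the honest upper bound on $\sum m_{\sigma(i,j)}$ over the off-diagonal cells of the $k\times k$ subarray (each symbol occurs at most $k$ times there, and all you know globally is $\sum_t m_t=n$) is of order $kn$, so nothing is forced when $k$ is small relative to $n$. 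Indeed, already for $k=3$ every off-diagonal cell of the subarray could carry a symbol of multiplicity $2$ on $D^*$, in which case no swap of two $s$-cells increases the weight or decreases $\phi$, and your extremal choice simply does not preclude $m_s=3$. Your item (iii) --- replaying the argument on any symbol that ``threatens to be too popular'' --- is circular, since bounding those multiplicities is precisely the statement being proved.

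The idea you are missing is that the swap partner should not be another $s$-cell. The paper's proof removes one $s$-cell $(r_1,r_1)$ together with a diagonal cell $(r_2,r_2)$ chosen by a pigeonhole count: writing $x_i$ for the number of symbols of multiplicity exactly $i$ on the diagonal, row $r_1$ contains at least $n-\sum_i x_i$ symbols absent from the diagonal, which strictly exceeds the number $\bigl(\sum_{i\ge2}x_i\bigr)-1$ of off-diagonal cells of column $r_1$ holding symbols of multiplicity $\ge 2$ (the strictness uses $x_i>0$ for some $i\ge3$). Hence there are at least two choices of $r_2$, so one with $r_2\neq r$, for which $L(r_1,r_2)$ is a symbol not on the diagonal and $L(r_2,r_1)$ has multiplicity at most one. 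The resulting swap preserves the weight and strictly decreases the number of cells carrying symbols of multiplicity greater than two, so iterating terminates. If you want to keep your extremal framing, you must at least enlarge the set of moves to include such swaps and replace $\phi$ by a monovariant of this kind; restricting to pairs of $s$-cells cannot work.
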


\begin{proof}
Let $L$ be a Latin array of order $n$. For convenience, we will assume
that the diagonal in question is the main diagonal and let $M$ be the
multiset of symbols on the main diagonal. If no symbol appears more
than twice in $M$, we are done. Otherwise, fix some entry
$(r,r,\unkn)$. We will find a diagonal of weight at least $w$ with the
desired properties that still contains $(r,r,\unkn)$.
 
Select $r_1 \neq r$ such that the symbol $L(r_1,r_1)$ appears in $M$
three or more times. Let $x_i$ be the number of symbols appearing
exactly $i$ times in $M$. It follows that
$$\sum_{i=0}^n ix_i = n.$$
Thus, 
$n-(x_1+x_2+\cdots+x_n) = x_2 + 2x_3 + \cdots + (n - 1)x_n > x_2 + x_3 + \cdots + x_n$.

Row $r_1$ contains at least $n-(x_1+x_2+\cdots+x_n)$ 
symbols that do not appear in $M$ and column $r_1$ contains $(\sum_{i=2}^n
x_i)-1$ symbols that appear more than once in $M$, besides
the entry $(r_1, r_1,\unkn)$. Thus, there are at least two values of
$r_2$ such that the symbol $L(r_1, r_2)$ does not appear in $M$ and
the symbol $L(r_2, r_1)$ does not appear more than once in $M$. Select
$r_2 \neq r$. Observe that 
$$T = \big(M \setminus\{(r_1,r_1,\unkn),(r_2,r_2,\unkn)\}\big) 
\cup \big\{(r_1,r_2,\unkn),(r_2,r_1,\unkn)\big\}$$ 
has fewer cells than $M$ that contain symbols that
appear more than twice in the diagonal and $(r,r,\unkn)$ still belongs
to $T$. Furthermore, $T$ is of weight at least $w$. By iterating,
we will therefore find a diagonal with the desired properties.
\end{proof}

Next, we give a simple example of how using $\#$-swaps is useful.

\begin{lemma}\label{lem:ord-6-near}
  In any Latin array of order $6$, there exists a diagonal of weight
  at least $5$.
\end{lemma}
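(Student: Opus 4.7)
The plan is to argue by contradiction: suppose $L$ is a Latin array of order $6$ in which every diagonal has weight at most $4$. Let $D$ be a diagonal of maximum weight. By \lref{lem:partial-to-weak} we may assume no symbol appears more than twice on $D$, and by \lref{lem:weak-trans} a type-B diagonal of weight $4$ can be $\#$-swapped to type A. Relabelling rows, columns and symbols, we may therefore assume that $D$ is the main diagonal with entries $(0,0,0),(1,1,0),(2,2,1),(3,3,1),(4,4,2),(5,5,3)$. The cases where the maximum weight is strictly below $4$ come with even more duplicated symbols on $D$ and are handled by the same style of argument, so I focus on this configuration.

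The first stage exploits the four valid $\#$-swaps that pair a symbol-$0$ entry with a symbol-$1$ entry. For each such swap the four untouched entries already cover $\{0,1,2,3\}$, so for the post-swap diagonal to have weight at most $4$ both newly-exposed cells $L(i,j)$ and $L(j,i)$ (with $i\in\{0,1\}$ and $j\in\{2,3\}$) must contain symbols in $\{0,1,2,3\}$; row- and column-Latinness forbid $0$ and $1$ there, so they must lie in $\{2,3\}$. Applying row- and column-Latinness in the central $4\times 4$ block then forces the top-right block (rows $\{0,1\}$, columns $\{2,3\}$) and the bottom-left block (rows $\{2,3\}$, columns $\{0,1\}$) each to be an intercalate on $\{2,3\}$. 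As a consequence, $L(0,1)$ and $L(1,0)$ cannot take any value in $\{0,2,3\}$, and $L(2,3)$ and $L(3,2)$ cannot take any value in $\{1,2,3\}$.

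To finish, I consider further diagonals (not obtained from $\#$-swaps), each of which must still have weight at most $4$ by hypothesis. The diagonal that replaces the symbol-$0$ doubleton on $D$ with the cells $(0,1),(1,0)$ forces $L(0,1)$ and $L(1,0)$ to be either both equal to $1$ or both equal to a single common symbol outside $\{0,1,2,3\}$, and symmetrically for $L(2,3),L(3,2)$. The diagonal replacing both doubleton pairs at once then further restricts the union $\{L(0,1),L(1,0),L(2,3),L(3,2)\}$ to at most two distinct values. Mixing these inner-corner cells with cells from rows and columns $4$ and $5$, anchored by $L(4,4)=2$ and $L(5,5)=3$, exposes a diagonal of weight at least $5$.

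The main obstacle is the last step: after the first wave of constraints, several locally consistent configurations remain, depending on the two possible orientations of each intercalate and the choices for the inner-corner symbols. The key lever is that the symbols $2$ and $3$ are very tightly pinned down (appearing in four cells of the central block, plus one each at $(4,4)$ and $(5,5)$), so that in every remaining case a short finite bookkeeping argument tracking the placement of $2$, $3$ and any newly introduced symbols in rows and columns $4,5$ produces a weight-$5$ diagonal, contradicting the assumption.
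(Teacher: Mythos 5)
Your setup matches the paper's: reduce to a type-A main diagonal $(0,0,1,1,2,3)$ via \lref{lem:weak-trans}, then observe that the eight cells in rows $\{0,1\}\times$ columns $\{2,3\}$ and rows $\{2,3\}\times$ columns $\{0,1\}$ are forced into $\{0,1,2,3\}$ by the four available $\#$-swaps. Your sharpening of this to ``each block is an intercalate on $\{2,3\}$'' is correct and is a legitimate strengthening of what the paper records at that stage (it only marks those cells as lying in $\{0,1,2,3\}$). Your constraints on the inner corners are also essentially right, modulo one slip: the diagonal $(0,1),(1,0),(2,2),(3,3),(4,4),(5,5)$ has base symbol set $\{1,2,3\}$, so the correct conclusion is that $\{L(0,1),L(1,0)\}$ contributes at most one symbol outside $\{1\}$; besides your two listed cases this allows a third, namely one cell equal to $1$ and the other equal to a fresh symbol (and symmetrically for $(2,3),(3,2)$ with the symbol $0$).

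The genuine gap is the final step. Everything you have derived lives inside the upper-left $4\times4$ block together with the two anchored cells $(4,4)$ and $(5,5)$; rows and columns $4$ and $5$ are otherwise completely unconstrained at the point where you stop, and ``mixing these inner-corner cells with cells from rows and columns $4$ and $5$ \dots\ exposes a diagonal of weight at least $5$'' is asserted rather than demonstrated. This is exactly where the paper's proof does its real work: it continues the $\#$-swap chain into rows $4$ and $5$ (exposing $(2,0)$, forcing it to be $2$ or $3$, swapping against $(4,4,2)$, forcing $(4,2)\in\{0,3\}$, and splitting into two cases), until the top row accumulates five cells all constrained to $\{0,1,2,3\}$, which is impossible in a row-Latin array by pigeonhole. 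Without an analogous explicit chain (or an explicit enumeration of the remaining configurations of the two intercalates and the inner corners, together with the specific diagonals that defeat each one), the contradiction has not been reached. The same criticism applies to your opening remark that weights strictly below $4$ are ``handled by the same style of argument'': the paper instead disposes of this by citing the $n-\sqrt{n}$ bound, which immediately gives a diagonal of weight at least $4$ for $n=6$, and you would do better to do likewise than to gesture at a parallel argument.
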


\begin{proof}
First, it is quite easy to show that the heaviest diagonal must be at
least of weight 4 (for example, use \cref{thm:n-sqrt-n}). We now
assume, on the contrary, that there exists a Latin array that contains
a diagonal of weight $4$, but none of weight 5 or 6. Without loss of
generality, the original diagonal of length $4$ is along the main
diagonal. By \cref{lem:weak-trans}, we may assume that it takes the form
given here.
    
\begin{center}
  \begin{tikzpicture}
    \begin{scope}
      \matrix (A) [square matrix]{
        |[fill=trans]|0 & ~ & ~ & ~ & ~ & ~ \\
        ~ & |[fill=trans]|0 & ~ & ~ & ~ & ~ \\
        ~ & ~  & |[fill=trans]|1 & ~  & ~  & ~ \\
        ~ & ~  & ~  & |[fill=trans]|1 & ~  & ~  \\
        ~ & ~  & ~& ~ & |[fill=trans]|2 & ~ \\
        ~ & ~  & ~&~& ~ & |[fill=trans]|3 \\
      };
      \draw[ultra thick] (A-1-1.north west) rectangle (A-6-6.south east);
    \end{scope}
  \end{tikzpicture}
\end{center}
    
At this point, we are presented with four options for which pair of
entries to $\#$-swap (choose either $0$ and either $1$
independently). From this, we can see that we have the following.

\begin{center}
  \begin{tikzpicture}
    \begin{scope}
      \matrix (A) [square matrix]{
        |[fill=trans]|0 & ~ & \x & \x & ~ & ~ \\
        ~ & |[fill=trans]|0 & \x & \x & ~ & ~ \\
        \x & \x  & |[fill=trans]|1 & ~  & ~  & ~ \\
        \x & \x  & ~  & |[fill=trans]|1 & ~  & ~  \\
        ~ & ~  & ~& ~ & |[fill=trans]|2 & ~ \\
        ~ & ~  & ~&~& ~ & |[fill=trans]|3 \\
      };
      \draw[ultra thick] (A-1-1.north west) rectangle (A-6-6.south east);
    \end{scope}
  \end{tikzpicture}
\end{center}

As explained above, each \x must be one of $0,\,1,\,2,\,3$; otherwise, we
would have a heavier diagonal. Consider $\#$-swapping the entries
in the first row and the third row. The symbol in the $(2,0)$ cell
must be either $2$ or $3$. Without loss of generality, we assume that
it is a $2$.

\begin{center}
  \begin{tikzpicture}
    \begin{scope}
      \matrix (A) [square matrix]{
        |[fill=trans2]|0 & ~ & |[fill=trans]|\x & \x & ~ & ~ \\
        ~ & |[fill=trans]|0 & \x & \x & ~ & ~ \\
        |[fill=trans]|2 & \x  & |[fill=trans2]|1 & ~  & ~  & ~ \\
        \x & \x  & ~  & |[fill=trans]|1 & ~  & ~  \\
        ~ & ~  & ~& ~ & |[fill=trans]|2 & ~ \\
        ~ & ~  & ~&~& ~ & |[fill=trans]|3 \\
      };
      \draw[ultra thick] (A-1-1.north west) rectangle (A-6-6.south east);
    \end{scope}
  \end{tikzpicture}
\end{center}
        
Note that we do not know what symbol is in the $(0,2)$ cell, but we do
know that it is a duplicate symbol (\ie~it appears at least one more
time on the diagonal or at least two more times if it is a $2$). Thus,
we are free to $\#$-swap on that entry now. We $\#$-swap that entry
and $(4,4,2)$.
    
\begin{center}
  \begin{tikzpicture}
    \begin{scope}
      \matrix (A) [square matrix]{
        0 & ~ & |[fill=trans2]|\x & \x & |[fill=trans]|\x & ~ \\
        ~ & |[fill=trans]|0 & \x & \x & ~ & ~ \\
        |[fill=trans]|2 & \x  & 1 & ~  & ~  & ~ \\
        \x & \x & ~ & |[fill=trans]|1 & ~  & ~  \\
        ~ & ~ & |[fill=trans]|\x & ~ & |[fill=trans2]|2 & ~ \\
        ~ & ~ & ~ &~& ~ & |[fill=trans]|3 \\
      };
      \draw[ultra thick] (A-1-1.north west) rectangle (A-6-6.south east);
    \end{scope}
  \end{tikzpicture}
\end{center}
    
The symbol in the $(4,2)$ cell must be either $0$ or $3$ and the
symbol in the $(0,4)$ cell is a duplicate (as described above), and so
may be used immediately. At this point, we consider both cases for the
$(4,2)$ cell separately. In either case, we $\#$-swap the entry in the
top row with the appropriate duplicated symbol.
\begin{center}
  \begin{tikzpicture}
    \begin{scope}
      \matrix (A) [square matrix]{
        0 & |[fill=trans]|\x & \x & \x & |[fill=trans2]|\x & ~ \\
        ~ & |[fill=trans2]|0 & \x & \x & |[fill=trans]|\x & ~ \\
        |[fill=trans]|2 & \x  & 1 & ~  & ~  & ~ \\
        \x & \x & ~ & |[fill=trans]|1 & ~  & ~  \\
        ~ & ~ & |[fill=trans]|0 & ~ & 2 & ~ \\
        ~ & ~ & ~ &~& ~ & |[fill=trans]|3 \\
      };
      \draw[ultra thick] (A-1-1.north west) rectangle (A-6-6.south east);
    \end{scope}
    
    \begin{scope}[xshift=4cm]
      \matrix (A) [square matrix]{
        0 & ~ & \x & \x & |[fill=trans2]|\x & |[fill=trans]|\x \\
        ~ & |[fill=trans]|0 & \x & \x & ~ & ~ \\
        |[fill=trans]|2 & \x  & 1 & ~  & ~  & ~ \\
        \x & \x & ~ & |[fill=trans]|1 & ~  & ~  \\
        ~ & ~ & |[fill=trans]|3 & ~ & 2 & ~ \\
        ~ & ~ & ~ &~& |[fill=trans]|\x & |[fill=trans2]|3 \\
      };
      \draw[ultra thick] (A-1-1.north west) rectangle (A-6-6.south east);
    \end{scope}
  \end{tikzpicture}
\end{center}

In either case, the top row now has five entries whose symbol must
come from the set $\{0,1,2,3\}$, which is impossible in a Latin array.
The result follows.
\end{proof}

Hatami and Shor~\cite{HatamiShor08} used this same idea to show 
\cref{lem:ord-6-near}. However, in their description,
they did not leave all of the symbols in the top row as unknown
(\x). Instead, they did extra case analysis to determine what those
symbols could be. By leaving the top row as unknown symbols, there is
the potential for less branching in the algorithm. Moreover, by
continually using the top row to $\#$-swap on, there are only two
choices of pairs of entries to $\#$-swap (but one of these choices
undoes the last change and reverts to the previous diagonal).

We describe two algorithms whose goal is to show that there is a near
transversal in all Latin arrays of a particular order
$n$. \Cref{alg:hash1} describes the basic algorithm to show that all
Latin arrays of order $n$ contain a near transversal. This algorithm
formalises the method used in the proof of \cref{lem:ord-6-near}. This
algorithm is then refined in \cref{alg:hash2}, which succeeds for larger
orders than \cref{alg:hash1} does.

It is important to note that in both algorithms below, all variables
are considered local variables, so changing the value of a parameter
does not affect its value outside of that specific instance.

\begin{algorithm}
  \caption{Basic algorithm to show that all Latin arrays of order $n$ contain a near transversal. 
    \textsc{NaiveHash}$(L,\epsilon,0,3)$ should be called initially, where $L$ is an $n \times n$ ($n \geq 4$) array with all cells empty except the main diagonal, which contains $(0,0,1,1,2,3,\dots,n-3)$ and $\epsilon$ is the identity permutation. Note that 3 is an arbitrary choice---we could have selected 2 or 3 (the rows of the duplicated symbol 1). }
  
  \label{alg:hash1}
  \hspace*{\algorithmicindent} \textbf{Input} $L$ is a partial Latin array with some otherwise empty cells marked with $\X$\\
  \hspace*{\algorithmicindent} \textbf{Input} $\sigma$ is a permutation defining a diagonal of weight $n-2$ in $L$\\
  \hspace*{\algorithmicindent} \textbf{Input} $d$ is the depth of the search\\
  \hspace*{\algorithmicindent} \textbf{Input} $r$ is the row we just hashed on\\
  \hspace*{\algorithmicindent} \textbf{Output} \textsc{True} if every Latin array that is a completion of the input has a near transversal.\\
  \hspace*{\algorithmicindent} \textbf{Output} \textsc{False} if the computation is inconclusive.

  \begin{algorithmic}[1]
    
    \Procedure{NaiveHash}{$L,\sigma,d,r$}

    \If {Some row or column of $L$ contains at least $n-1$ filled cells}
      \State\Return{\textsc{True}}\myComment{Near transversal guaranteed}
    \EndIf
    \If {$d \neq 0$ and $\sigma$ is the identity and $r = 3$}
      \State\Return{\textsc{False}}\label{algline:false1}\myComment{We have cycled back to where we started}
    \EndIf

    \State~
    \State $S \gets L(r,\sigma_r)$ \myComment{Symbol to hash on}
    \State $R \gets $ \text{row such that } $\sigma_R = S$, $R > 0$ \text{ and } $R \neq r$ \myComment{Other row that contains $S$ on $\sigma$}

    \State swap$(\sigma_0,\sigma_R)$ \myComment{Update $\sigma$ to enact the $\#$-swap}
    \State ~
    \If{$L(R,\sigma_R) \neq \X$} \myComment{If we already know what symbol this is}
      \State \Return{\Call{NaiveHash}{$L,\sigma,d+1,R$}}
    \Else \myComment{If we do not know what symbol is here, then try all valid ones.}
      \State $k \gets $ largest symbol in $L$ that appears multiple times
      \State \myComment{The symbols $k+1, k+2, \dots, n-3$ are all symmetric up to} \\ \hspace*{\fill} {\color{blue} this point, so we only need to consider one of them} \\ \hspace*{\fill} {\color{blue} (without loss of generality, we use $k+1$).}
      \For{$s \gets 0$ to min($k+1,n-3$)}
        \If{$s$ is not in row $R$ nor column $\sigma_R$}
          \State $L(R,\sigma_R) \gets s$
          \If{\Call{NaiveHash}{$L,\sigma,d+1,R$} = \textsc{False}}
            \State\Return{\textsc{False}}
          \EndIf
        \EndIf
       \EndFor

       \State \Return{\textsc{True}} \myComment{No matter which symbol we use, there is a near transversal}
    \EndIf
    \EndProcedure
    \end{algorithmic}
\end{algorithm}

\Cref{alg:hash1} is sufficient to show that all Latin arrays of order
$n \leq 7$ contain a near transversal (this result was obtained by an
independent calculation in \cite{BHWWW18}). However, for $n = 8$,
\cref{alg:hash1} fails to show the desired result as it returns False
for \cref{fig:fail-alg1}.

\begin{figure}[h]
\centering
  \begin{tikzpicture}
    \begin{scope}
      \matrix (A) [square matrix]{
        0 & \x & \x & \x & \x & \x &  ~ &  ~ \\
        ~ &  0 &  ~ &  ~ &  1 &  2 &  ~ &  ~ \\
        2 &  ~ &  1 &  3 &  ~ &  ~ &  ~ &  ~ \\
        3 &  2 &  ~ &  1 &  0 &  ~ &  ~ &  ~ \\
        ~ &  3 &  0 &  ~ &  2 &  ~ &  ~ &  ~ \\
        1 &  ~ &  ~ &  0 &  ~ &  3 &  ~ &  ~ \\
        ~ &  ~ &  ~ &  ~ &  ~ &  ~ &  4 &  ~ \\
        ~ &  ~ &  ~ &  ~ &  ~ &  ~ &  ~ &  5 \\
      };
      \draw[ultra thick] (A-1-1.north west) rectangle (A-8-8.south east);
    \end{scope}
  \end{tikzpicture}
  \caption{\label{fig:fail-alg1}One of 14 squares that fail Algorithm 1 for $n=8$.}
\end{figure}

A total of 14 partial Latin arrays fail \cref{alg:hash1} for $n=8$
(meaning that Line 5 of \cref{alg:hash1} is reached 14 times). For
$n=9$, one may expect more squares to fail \cref{alg:hash1}, but
interestingly, those 14 squares (with one extra row and column added)
are the only squares to fail \cref{alg:hash1}. For $n=10$, a total of
82\,140 squares fail \cref{alg:hash1}. By comparison, Line 3 of
\cref{alg:hash1} was reached 2\,657, 377\,452 and 696\,808\,457 times
respectively for $n=8,\,9,\,10$.

The failures such as \cref{fig:fail-alg1} show that a more refined
approach is needed to find near transversals in larger orders. The
first observation is that after we have cycled back on ourselves and
returned False on Line~\ref{algline:false1} of \cref{alg:hash1}, we
may now choose another row to $\#$-swap on, rather than the first
one. Recall that by only using $\#$-swaps on the top row, we are only
utilising two of the possible $\#$-swaps available (there are 4
possible if the diagonal is of type A and 3 if it is of type B). We
also note in passing that, although our searches are always based
around the main diagonal, there are other options available as soon as
other diagonals of weight $n-2$ have been located.

In \cref{alg:hash2}, we first $\#$-swap along the top
row. Once we cycle around, we then $\#$-swap along the second row,
then the third, then the fourth. In \cref{alg:hash1}, we arbitrarily
selected row 3 to be the initial value for $r$. In \cref{alg:hash2},
when we are $\#$-swapping on rows 0, 1, 2 and 3, we use the rows 3, 2, 1
and 0, respectively for the initial value of the ``row we just
$\#$-swapped on''. It was convenient, but far from essential, to know
that $r_0+r_1=3$.

There are two heuristics that can be added to the search that
significantly improve its performance when utilised
together. (However, there is a minor drawback to using them, which we
will discuss in \sref{s:nearnear}.) The first heuristic is to search
for diagonals of weight $n-2$ that may not be reachable via
$\#$-swaps. If a partial transversal of length $n-2$ covers
all rows except $r_0$ and $r_1$
and all columns except $c_0$ and $c_1$, then we know that each of the
cells $(r_0,c_0), (r_0,c_1), (r_1,c_0)$ and $(r_1,c_1)$ must also
contain symbols from $\{0,1,\dots,n-3\}$, or else we would have a near
transversal. Thus, if those cells are empty, we may fill them with an
\x. In practice, every time that we fill a cell with a specific symbol
(not an \x), we only search for diagonals that go through that
cell. The second heuristic is to choose some \x in the square and
decide what that symbol should be by exhaustively trying each one. We
define the \emph{liberties} of a cell to be the number of symbols that
could be placed into the cell without violating the Latin property. In
practice, we choose an \x that has the fewest liberties to limit the
branching that our search does. These heuristics are combined in
\cref{alg:hash2}.

\begin{algorithm}
  \caption{More advanced algorithm to determine if all Latin arrays of order $n$ contain a near transversal.
    \textsc{Hash}$(L,\epsilon,0,0,3)$ should be called initially, where $L$ is an $n \times n$ ($n \geq 4$) array with all cells empty except the main diagonal, which contains $(0,0,1,1,2,3,\dots,n-3)$ and $\epsilon$ is the identity permutation. \textsc{FillCell} simply tries all valid symbols to place in the cell $(r,c)$ and calls \textsc{Hash} with the same parameters, but with depth $d+1$.
  }
  \label{alg:hash2}
  \hspace*{\algorithmicindent} \textbf{Input} $L$ is a partial Latin array with some otherwise empty cells marked with $\X$\\
  \hspace*{\algorithmicindent} \textbf{Input} $\sigma$ is a permutation defining a diagonal of weight $n-2$\\
  \hspace*{\algorithmicindent} \textbf{Input} $d$ is the depth of the search with the current $r_0$\\
  \hspace*{\algorithmicindent} \textbf{Input} $r_0$ is the row we are mainly $\#$-swapping on (this was the top row in \cref{alg:hash1})\\
  \hspace*{\algorithmicindent} \textbf{Input} $r_1$ is the other row that we just $\#$-swapped on\\
  \hspace*{\algorithmicindent} \textbf{Output} \textsc{True} if every Latin array that is a completion of the input has a near transversal.\\
  \hspace*{\algorithmicindent} \textbf{Output} \textsc{False} if inconclusive.

  \begin{algorithmic}[1]
    
    \Procedure{Hash}{$L,\sigma,d,r_0,r_1$}

    \If {Some row or column of $L$ contains at least $n-1$ filled cells}
      \State\Return{\textsc{True}}\myComment{Near transversal guaranteed}
    \EndIf

    \If {$d \neq 0$ and $\sigma$ is the identity and $r_0+r_1 = 3$} \label{algline:loopback2}\myComment{We have cycled back}
      \If {$r_0 \geq 3$} \Return{\textsc{False}}\label{algline:false2}\myComment{We need to try something different.}
      \Else { \Return{\Call{Hash}{$L,\sigma,0,r_0+1,r_1-1$}} \myComment{Try $\#$-swapping along the next row.} }
      \EndIf
    \EndIf

    \State~
    
    \If{$d \equiv 3 \pmod 4$ and there is at least one \x in $L$}
      \State $(r,c) \gets $ cell such that $L(r,c) = \X$.
      \myComment{If there are multiple \x, select one with the} \\ \hspace*{\fill} {\color{blue} fewest liberties, breaking ties by selecting} \\ \hspace*{\fill} {\color{blue} the first one in row-major order.}
      \State\Return{\Call{FillCell}{$L,r,c,\sigma,d,r_0,r_1$}}
    \EndIf

    \State~
    \State $S \gets L(r_1,\sigma_{r_1})$ \myComment{Symbol to $\#$-swap on}
    \State $R \gets $ \text{row where } $\sigma_R = S$, $R \neq r_0$ \text{ and } $R \neq r_1$ \myComment{Other row that contains $S$ on $\sigma$}

    \State swap$(\sigma_{r_0},\sigma_R)$ \myComment{Update $\sigma$ to enact the $\#$-swap}
    
    \State\Return{\Call{FillCell}{$L,R,\sigma_R,\sigma,d,r_0,R$}}
    
    \EndProcedure

    \State~
    \Procedure{FillCell}{$L,r,c,\sigma,d,r_0,r_1$}
      \If{$L(r,c) \in \{0,\dots,n-3\}$} \Return{\Call{Hash}{$L,\sigma,d+1,r_0,r_1$}} \EndIf

      \State~

      \State $k \gets $ largest symbol in $L$ that appears multiple times
      \For{$s \gets 0$ to min($k+1,n-3$)}
         \If{$s$ is not in row $r$ nor column $c$}
          \State $L' \gets L$ \myComment{Store a copy of $L$}
          
          \State $L(r,c) \gets s$
          \ForEach {Partial transversal, $T$, of length $n-2$}
            \State $\{R_1,R_2,C_1,C_2\} \gets$ the two rows and two columns missing from $T$
            \State Fill in cells $(R_1,C_1), (R_1,C_2), (R_2,C_1), (R_2,C_2)$ with \x if empty
          \EndFor
          \If{\Call{Hash}{$L,\sigma,d+1,r_0,r_1$} = \textsc{False}}
            \State\Return{\textsc{False}}
          \EndIf
          \State $L \gets L'$ \myComment{Restore $L$ to its previous configuration}
        \EndIf
       \EndFor

       \State \Return{\textsc{True}} \myComment{No matter which symbol we place here, there is a near transversal}

    \EndProcedure
    \end{algorithmic}
\end{algorithm}

\Cref{alg:hash2} is good enough to show the following result. Both
implementations of the algorithm reached Line 5 the same number of
times (namely 53 times for $n=10$ and $105\,287$ times for $n=11$ with
$r_0=0$ in all cases) providing some corroboration of each other.

\begin{theorem}\label{th:near-trans-11}
  Every Latin array of order $n \leq 11$ contains a near transversal.
\end{theorem}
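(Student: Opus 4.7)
The plan is to prove the statement by induction on $n$, appealing at each step to a computer-assisted case analysis driven by \cref{alg:hash2}. Base cases for small $n$ are already handled by \lref{lem:ord-6-near} and analogous direct arguments (or by \cref{thm:n-sqrt-n}). For the inductive step, assume every Latin array of order $n-1$ contains a near transversal, so by \lref{l:extend} every Latin array $L$ of order $n$ admits a diagonal of weight at least $n-2$. Seeking a contradiction, suppose $L$ has no diagonal of weight exceeding $n-2$. By \cref{lem:weak-trans} we may reduce, via $\#$-swaps that do not decrease weight, to a diagonal of type A, and after relabelling rows, columns and symbols this diagonal can be taken to be the main diagonal with entries $(0,0,1,1,2,3,\ldots,n-3)$. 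This is the canonical starting configuration for the search.

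From here I would launch \cref{alg:hash2}, whose soundness rests on four invariants that must be checked informally before coding: (i) any cell reached by a chain of $\#$-swaps from a maximum-weight diagonal must contain a symbol from $\{0,1,\ldots,n-3\}$, for otherwise exposing a new symbol would yield a heavier diagonal; (ii) whenever a partial transversal of length $n-2$ is exhibited, the four cells in the two missing rows and columns must also take values in $\{0,1,\ldots,n-3\}$, or else $L$ has a near transversal; (iii) the \textsc{FillCell} branching exhausts all admissible symbols for a marked cell, with the symmetry observation collapsing the currently-unused symbols $k+1,\ldots,n-3$ into a single representative; and (iv) as soon as $n-1$ cells in any row or column are forced into a set of size only $n-2$, a pigeonhole collision contradicts the Latin property of $L$. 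Each branch of the search therefore closes either by exposing a new symbol (contradicting maximality) or by triggering the pigeonhole shortcut.

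If the algorithm returns \textsc{True} from the canonical configuration for every $n \le 11$, the induction closes and the theorem follows. The main obstacle is purely computational: the search tree for $n = 11$ is enormous, so aggressive pruning is essential (hence the liberty-based heuristic in \textsc{FillCell} and the partial-transversal detection that aggressively marks cells with $\X$). I would guard against implementation bugs by writing two independent programs and comparing detailed search statistics, for instance the number of terminal configurations at which Line 5 is reached and the branching counts at each recursion depth; agreement across independent codebases supplies strong empirical corroboration. The residual conceptual risk is that \cref{alg:hash2} might still return \textsc{False} on some branch for some $n \le 11$, in which case the heuristics would need to be strengthened — for example by deeper partial-transversal lookahead or by simultaneously branching on several $\X$-cells — before rerunning. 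The fact that the na\"ive \cref{alg:hash1} fails already at $n=8$ suggests this risk is real, and so the key engineering step is to confirm empirically that the enhancements in \cref{alg:hash2} suffice up to $n=11$.
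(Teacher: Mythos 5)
Your proposal is correct and follows essentially the same route as the paper: iterate \lref{l:extend} to obtain a diagonal of weight at least $n-2$, normalise to the type~A configuration $(0,0,1,1,2,3,\dots,n-3)$ via \cref{lem:weak-trans}, and then discharge the remaining case analysis by running \cref{alg:hash2} for $n=4,\dots,11$, with cross-validation by independent implementations. The soundness invariants and verification strategy you spell out match what the paper relies on implicitly.
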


\begin{proof}
  The cases where $n < 4$ are easy to see (they also follow from
  \cref{thm:n-sqrt-n}). We iteratively use \lref{l:extend} and
  \cref{alg:hash2} for $n=4,\dots,11$.
\end{proof}

In the search for $n = 11$, every \#-swap included one of the first two
rows (that is, $r_0 \in \{0,1\}$ in \cref{alg:hash2}).  The search for
$n \leq 10$ can be completed in a matter of minutes, while a few hours
is needed for $n=11$. Based on this progression, we initially believed
$n = 12$ to be possible. However, after running our program for
several months with several different pruning heuristics on a small
grid, we did not think that the program would finish in a reasonable
amount of time. Due to the recursive nature of the algorithm, it is
difficult to accurately determine what percentage of the search space
was covered over those months. Needless to say, all cases that we
searched did not provide a counterexample to \cjref{conj:brualdi}.

If one wishes to use \cref{alg:hash2} to find near transversals in
Latin squares (rather than in all Latin arrays), then we would
recommend extra heuristics be employed. For example, the partial Latin
array in \cref{fig:loop-back-sqr} is one of many squares that loops
back on \cref{algline:loopback2} of \cref{alg:hash2} with $r_0=0$. The
search continues with $r_0 = 1$, however, no further search is needed
if we are only concerned with Latin squares. There are only two
symbols which are not on the original diagonal (9 and 10), but there
are not enough empty cells left to place them into. In particular, at
least $2(n-2)-4$ cells in the top $(n-2) \times (n-2)$ submatrix must
be empty in order to fit in the two missing symbols.

\begin{figure}
  \centering
  \begin{tikzpicture}
    \begin{scope}
      \matrix (A) [square matrix]{
         0 &  6 & \x &  5 &  3 & \x &  2 &  4 &  1 &  ~ &  ~ \\
        \x &  0 &  3 & \x &  1 & \x & \x & \x & \x &  ~ &  ~ \\
         2 & \x &  1 &  4 & \x & \x & \x &  0 &  3 &  ~ &  ~ \\
         6 &  2 & \x &  1 & \x & \x & \x & \x &  5 &  ~ &  ~ \\
        \x & \x &  0 &  3 &  2 & \x & \x &  6 &  4 &  ~ &  ~ \\
        \x &  1 &  5 & \x & \x &  3 &  6 & \x & \x &  ~ &  ~ \\
         3 & \x & \x & \x &  5 & \x &  4 & \x & \x &  ~ &  ~ \\
        \x & \x & \x & \x & \x &  2 &  1 &  5 &  0 &  ~ &  ~ \\
        \x & \x & \x &  2 & \x &  5 &  0 & \x &  6 &  ~ &  ~ \\
         ~ &  ~ &  ~ &  ~ &  ~ &  ~ &  ~ &  ~ &  ~ &  7 &  ~ \\
         ~ &  ~ &  ~ &  ~ &  ~ &  ~ &  ~ &  ~ &  ~ &  ~ &  8 \\
      };
      \draw[ultra thick] (A-1-1.north west) rectangle (A-11-11.south east);
    \end{scope}
  \end{tikzpicture}
  \caption{\label{fig:loop-back-sqr} A partial Latin array that loops back in \cref{algline:loopback2} when $r_0=0$.}
\end{figure}

The fact that all Latin arrays, and not just Latin squares, have near
transversals is an encouraging sign for \cjref{conj:brualdi}. It is
plausible that an even stronger result holds:

\begin{conjecture}\label{conj:general-brualdi}
  Every Latin array contains a near transversal.
\end{conjecture}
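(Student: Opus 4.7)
The plan is to mount a two-phase attack paralleling the proof of \cref{th:near-trans-11}: drive the deficit down to something sublinear, then apply a refined $\#$-swap analysis to close the remaining gap. (The rectangular case $m<n$ of \cjref{conj:general-brualdi} only asks for a partial transversal of length $m-1$ in an $m\times n$ Latin array and should follow, by selecting $m-1$ rows, from a strengthened form of \cjref{conj:brualdi-rectangle}; I would address it first as a warm-up.) For the first phase in the square case, I would try to extend \cref{th:n-log} from Latin squares to arbitrary $n\times n$ Latin arrays, showing that every such array has a partial transversal of length $n-O(\log n/\log\log n)$. The random-greedy and absorption arguments of Keevash \etal exploit the fact that in a Latin square every symbol occurs exactly $n$ times, whereas in a general Latin array symbol frequencies range over $\{1,\ldots,n\}$. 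I would split the analysis according to the frequency profile: if sufficiently many symbols have high frequency, the usual absorbing template can be built from those, whereas if most symbols are light then a direct pigeonhole on distinct symbols already produces a long partial transversal.

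For the second phase, I would generalise the $\#$-swap machinery of \cref{s:brualdi} to arbitrary deficit. The computations of the next section already handle deficit at most $20$, so the task is to replace case analysis by a uniform structural argument. Specifically, I would strengthen \cref{lem:weak-trans} so that from any maximum-weight diagonal of weight $n-k$ one can $\#$-swap to a canonical form in which every repeated symbol appears exactly twice and the duplicated pairs lie in prescribed rows, then iterate the pigeonhole used in the proof of \cref{lem:ord-6-near}. The goal would be an induction on $k$ showing that either an improving $\#$-swap exists or $\Omega(k)$ cells of some fixed row are forced to carry symbols from a set of size less than the row length, which is impossible.

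The hard part will be the second phase. Since \cjref{conj:brualdi} itself is the special case of \cjref{conj:general-brualdi} restricted to Latin squares, any proof must in particular establish Brualdi's conjecture, and the fact that the latter has resisted attack for decades suggests that a clean inductive $\#$-swap argument is unlikely to exist in the form above. A more modest intermediate target worth pursuing is a reduction from \cjref{conj:general-brualdi} to \cjref{conj:brualdi}: given a Latin array $L$ of order $n$, embed $L$ into a Latin square $S$ of order $N=N(n)$ in such a way that any near transversal of $S$ is forced to cover at least $n-1$ cells of $L$. Controlling which cells of $S$ the near transversal hits appears delicate, but even formulating the right embedding condition would likely clarify the structural content of \cjref{conj:general-brualdi} beyond what is currently understood.
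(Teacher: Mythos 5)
You have not proved anything here, and indeed the statement you were given is not a theorem of the paper at all: \cref{conj:general-brualdi} is stated as an open conjecture, a strengthening of Brualdi's conjecture (\cref{conj:brualdi}), and the paper establishes it only for orders $n\le11$ (\cref{th:near-trans-11}) by a large computation, noting that it is implied by the also-open \cref{conj:brualdi-rectangle}. Your text is a research programme, and each of its stages contains an unfilled gap that is precisely the open problem. In the first phase, even a successful extension of \cref{th:n-log} to general Latin arrays would leave a deficit of $O(\log n/\log\log n)$, not $1$; nothing you propose bridges that remaining gap. In the second phase, the ``uniform structural argument'' replacing case analysis is exactly the missing idea: you give no mechanism by which the generalisation of \cref{lem:weak-trans} or the pigeonhole of \cref{lem:ord-6-near} would close an unbounded deficit. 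The paper's own evidence runs against this hope --- the deficit-$1$ case already required months of computation and became infeasible at $n=12$, and the bounds of \cref{lem:shor-eq} show the known $\#$-swap machinery forces $n_k$ to grow roughly quadratically in $k$, i.e.\ the method weakens, rather than strengthens, as the deficit grows.

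Your reduction remarks are also circular: deriving the rectangular case from ``a strengthened form of \cref{conj:brualdi-rectangle}'' replaces one open conjecture by another, and the proposed embedding of a Latin array into a Latin square whose near transversals are forced into the original array is stated as a wish with no candidate construction (and would in any case still require \cref{conj:brualdi} as input). You correctly observe in your final paragraph that any proof of this statement would settle Brualdi's conjecture, which has been open for decades; that observation is accurate, but it confirms that what you have written is a plan for future work, not a proof, and it cannot be compared with a proof in the paper because the paper, correctly, offers none.
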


This conjecture is implied by \Cref{conj:brualdi-rectangle}.  Of
course, the fact that our arrays are Latin seems to be a very
important factor in the potential truth of either conjecture. The idea
used in \cref{th:stein-wrong} relies heavily on clumping all $n$ of
each symbol into a small submatrix. The idea cannot be easily changed
to accommodate only one of each symbol per row or
column. \tref{th:stein-wrong} was not optimised in \cite{PS19}. It
would be interesting to know the smallest value of $n$ where an
equi-$n$-square exists that does not contain a near transversal.

\section{\label{s:nearnear}Long partial transversals}

In this final section, we improve the lower bounds on the length of
the longest partial tranversal that Latin arrays of all orders $n<1449$
are known to possess. In the proof of \cref{th:n-log2} by Shor and
Hatami~\cite{HatamiShor08}, one of the key ingredients was sets of
diagonals with the same weight that were connected by a sequence of
$\#$-swaps. A sequence of integers $n_k$ was discussed in detail. To
connect those to our results here, $n_2$ is defined as the smallest
order such that a diagonal of weight $n-2$ cannot be $\#$-swapped to
uncover a new symbol. Note that the heuristics employed in
\cref{alg:hash2} mean that we cannot use the results from
\cref{th:near-trans-11} to show that $n_2 \geq 12$. However, we
verified that $n_2 \geq 11$ utilising a similar idea to
\cref{alg:hash1,alg:hash2}.

Upon first glance, the $n-11.053\log^2n$ bound shown by Shor and
Hatami~\cite{HatamiShor08} is weaker than the $n-\sqrt{n}$ bound in
\tref{thm:n-sqrt-n} for small values of $n$. In fact, for
$n\leq7\,731\,462$, it is better to use the $n-\sqrt{n}$
bound. However, the groundwork laid out in the asymptotic proof in
\cite{HatamiShor08} can be used in a concrete way to show
significantly better bounds for lower orders. The key sequence, $n_k$,
is a bound on the size that a square must have before being able to
\#-swap from a diagonal of weight $n-k$ to a heavier one. In
particular, any Latin array of order $n < n_k$ contains a diagonal
with weight greater than $n-k$.

The following lemma is taken from \cite{HatamiShor08}, except the
first inequality has been strengthened as explained above.

\begin{lemma}\label{lem:shor-eq}
  \begin{equation} \label{eq:shor1} n_2 \geq 11, \end{equation}
  \begin{equation} \label{eq:shor2} n_k \geq n_{k-1} + 2k \quad \text{ for } k > 2 \text { and } \end{equation}
  \begin{equation} \label{eq:shor3} (n_k-n_j)(2n_j+n_{k-1}-2n_k+2k-j) \leq n_j(n_j-n_{j-1}-2j) \quad \text{ for } 3 \leq j < k. \end{equation}
\end{lemma}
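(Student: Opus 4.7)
The only genuinely new ingredient is \eqref{eq:shor1}; the other two inequalities are from \cite{HatamiShor08} and my plan is to reproduce that argument with only cosmetic bookkeeping changes. To establish $n_2 \geq 11$, my plan is to run a stripped-down variant of \cref{alg:hash2} in which both heuristics are disabled: the $\X$-filling driven by partial transversals lying outside the $\#$-swap-reachable component, and the exhaustive branching on a cell of fewest liberties. Without those two moves, every step of the search is a pure $\#$-swap, so a return of \textsc{True} on input of order $n$ certifies that every weight-$(n-2)$ diagonal in a Latin array of order $n$ can be $\#$-swapped to uncover a new symbol, i.e.\ that $n_2 > n$. Running the stripped-down procedure for $n = 4, 5, \ldots, 10$ and observing \textsc{True} in every case therefore yields $n_2 \geq 11$. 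The reason this does not immediately upgrade to $n_2 \geq 12$ is exactly that the heuristics in \cref{alg:hash2} (used in \cref{th:near-trans-11}) insert non-$\#$-swap operations, and so certify existence of a near transversal without certifying $\#$-swap reachability.

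For \eqref{eq:shor2} my plan is to argue by contradiction. Suppose $n = n_k - 1 < n_{k-1} + 2k - 1$ hosts a Latin array $L$ whose heaviest $\#$-swap-reachable diagonal $D$ has weight $n - k$. Choose a symbol $s$ occurring at least twice on $D$, and $\#$-swap its two entries; each of the two freed cells must contain a symbol already on $D$ (or else we would have produced a strictly heavier diagonal, contradicting maximality). Continuing to $\#$-swap on all duplicated symbols of $D$ forces a structure whose symbols are confined to the $k$ duplicated classes. After deleting the $2k$ rows and columns where these duplicated classes live, what remains is a Latin sub-array of order at least $n_{k-1}$ carrying a diagonal of weight $(n - 2k) - (k-1)$ from which no $\#$-swap can uncover a new symbol---contradicting the definition of $n_{k-1}$.

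The inequality \eqref{eq:shor3} is obtained by refining that contradiction with a nested excision: inside the outer Latin sub-array of order $n_k - n_j$ extracted at level $k$, one further extracts an inner sub-array of order $n_j$ on which the $\#$-swap obstruction at level $j$ must be witnessed, and inside that an $n_{j-1}$-region at level $j-1$. The product inequality then comes from the pigeonhole statement that the $n_k - n_j$ ``outer'' cells available to receive a new symbol must all be occupied by symbols that are already duplicated in the inner $n_j$-block, and one simply counts those duplicated symbols via $n_j(n_j - n_{j-1} - 2j)$ on the one side and via the factor $2n_j + n_{k-1} - 2n_k + 2k - j$ on the other. The main obstacle, and the reason this part really needs to be quoted from \cite{HatamiShor08}, is verifying that the two nested excisions interact cleanly: one has to check that the $n_{k-1}$-term in the middle factor is actually realised by a legitimate $\#$-swap-reachable diagonal bridging the $j$-level and $k$-level structures, rather than being merely a formal bound. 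Once that compatibility is in place, both \eqref{eq:shor2} and \eqref{eq:shor3} are immediate arithmetic consequences.
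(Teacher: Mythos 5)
Your proposal takes essentially the same route as the paper: \eqref{eq:shor1} is established by a pure-$\#$-swap computational search over all orders $n\le 10$ (the paper likewise only says it ``verified that $n_2\geq 11$ utilising a similar idea'' to its two algorithms, and your explanation of why the heuristics of \cref{alg:hash2} block an upgrade to $n_2\ge 12$ is exactly the paper's own remark), while \eqref{eq:shor2} and \eqref{eq:shor3} are in both cases simply imported from \cite{HatamiShor08}. Your supplementary sketches of the latter two inequalities are not load-bearing (and note that the contradiction setup for \eqref{eq:shor2} is stated backwards --- the witness array must have order $n_k$, not $n_k-1$, since by minimality no array of order $n_k-1$ exhibits the $\#$-swap obstruction), but as the paper itself gives no proof of \eqref{eq:shor2} and \eqref{eq:shor3} beyond the citation, this does not constitute a gap relative to the paper.
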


Shor and Hatami used \cref{eq:shor3} to show that $k \leq 11.053
\log^2n_k$. While this seems worse than \cref{thm:n-sqrt-n} for small
values, simple induction using \cref{eq:shor1,eq:shor2} shows that
$n_k \ge k^2+k+5$, giving a better bound than \cref{thm:n-sqrt-n} for all
$n$. For small values, the value of $11.053\log^2 n$ is far from
the truth.

Searching for a single sequence that satisfies inequalities
\cref{eq:shor1,eq:shor2,eq:shor3} is quite simple. In fact, for any
sequence $[n_2, \dots, n_\ell]$ that satisfies
\cref{eq:shor1,eq:shor2,eq:shor3}, you may extend it by setting
$n_{\ell+1} = 2n_\ell + 2(\ell+1)$ and it will still satisfy
\cref{eq:shor1,eq:shor2,eq:shor3}. However, the true interest lies in
the smallest value that $n_k$ can achieve for each $k$. Unfortunately,
a naive search is not feasible for determining this value for even
modest values of $k$, so heuristics are needed to trim the search
space. We start by noting that we may assume that $n_2 = 11$ since if
$[n_2, n_3, \dots , n_k]$ satisfies \cref{eq:shor1,eq:shor2,eq:shor3},
then $[11, n_3, \dots, n_k]$ also satisfies
\cref{eq:shor1,eq:shor2,eq:shor3}. Unfortunately, this greedy nature
does not generalise to the remaining parts of the sequence. In order
to minimise $n_k$, we may need to use non-optimal values for
$n_3,\dots,n_{k-1}$. For example, $n_4 = 28$ is attainable. However,
to achieve $n_5=41$, we must use $n_4=31$.

In computations to find the smallest value that $n_k$ can take, the
following heuristic proved very useful. Suppose that we have a current
candidate for the smallest value of $n_k$, say $\kappa$. We say that a
pair $(x,y)$ is \textit{viable} if there exists a sequence
$[\n_x=y,\n_{x+1},\dots,\n_k]$ that satisfies
\cref{eq:shor1,eq:shor2,eq:shor3} and $\n_k < \kappa$. Note that
\cref{eq:shor3} need only be satisfied for $x < j < k$ and that we may
assume that $(x+1,\n_{x+1}),\ldots,(k-1,\n_{k-1})$ are viable. Thus,
in any further computation, we should only use viable pairs 
$(x,y)$.

To determine the smallest possible value for $n_k$, we constructed
sequences that satisfied \cref{eq:shor1,eq:shor2,eq:shor3} and only
contained viable pairs. Each time we found a sequence that had a smaller
value for $n_k$, we recomputed which pairs of $(i,j)$ were viable. Note
that a pair that was marked as not viable could never be marked as
viable for a smaller value of $n_k$, so it never needs to be reconsidered.
If for some $x$ there were no viable pairs $(x,y)$ then we concluded that
the current candidate for the smallest value of $n_k$ is indeed the
smallest value possible. At that point we could begin the search for the
smallest possible value of $n_{k+1}$.

Formalising this approach, say that $v(x,y)$ is true if the pair
$(x,y)$ is viable and false otherwise. Let
$s_k=[\n_{k,2},\n_{k,3},\dots,\n_{k,k}]$ denote our current candidate for a
sequence achieving the lowest possible value for $n_k=\n_{k,k}$.
We start
with $s_2=[11]$.  Then for $k=3,4,\dots,$ we seek the sequence $s_k$
as follows:
\begin{enumerate}
\item Let $s_k$ be the sequence formed from $s_{k-1}$ by appending the
  smallest value that ensures that $s_k$
  satisfies \cref{eq:shor1,eq:shor2,eq:shor3} when we put
  $n_i=\n_{k,i}$ for each $i$.
\item For each $j$, let $v(2,j)$ be true if and only if $j=11$
  and let $v(k,j)$ be true if and only if $\n_{k-1,k-1}+2k\le j<\n_{k,k}$.
\item For $i=k-1,k-2,\dots,3$; for each $j$, let
  $v(i,j)$ be true if and only if $j\ge \n_{i,i}$ and $v(i+1,j')$ is true
  for some $j'\ge j+2i+2$.
\item For $i=k-1,k-2,\dots,2$; for each $j$ for which $v(i,j)$ is true,
  check that there is a sequence $[t_i=j,t_{i+1},\dots,t_k]$ such that
  putting $n_x=t_x$ for $i\le x\le k$ satisfies
  \cref{eq:shor1,eq:shor2,eq:shor3}, and
  where $v(x,t_x)$ is true for $i\le x\le k$.
  If not, set $v(i,j)$ to false.
\item If there is some $i$ for which $v(i,j)$ is false for all $j$, then
  declare the present $s_k$ to be the best possible, and
  begin looking for the best $s_{k+1}$.  Otherwise, $v(2,11)$ must be
  true. Replace $s_k$ by the sequence $[t_2,\dots,t_k]$ that showed that
  $v(2,11)$ is true. Mark $v(k,j)$ to be false for $j\ge t_k$. Go to step 4.
\end{enumerate}

Using the algorithm outlined above we were able to compute the
smallest values that $n_k$ can take and satisfy \cref{lem:shor-eq},
for $k\le21$. The results are presented in \Cref{tab:n_k}. Note that
the same algorithm is capable of computing several further values
for $n_k$, with each additional value requiring a few days of
computation.

\begin{table}[htp]\centering
  \begin{tabular}{lll}
    \toprule
    $k$ & One sequence $[n_2, \dots, n_k]$ that minimises $n_k$ \\
    \midrule
    2  & $[11]$ \\
    3  & $[11,17]$ \\
    4  & $[11,17,28]$ \\
    5  & $[11,17,31,41]$ \\
    6  & $[11,17,28,46,58]$ \\
    7  & $[11,17,28,42,64,78]$ \\
    8  & $[11,17,28,42,63,90,107]$ \\
    9  & $[11,17,28,46,58,91,122,140]$ \\
    10 & $[11,17,28,42,64,78,122,157,177]$ \\
    11 & $[11,17,28,42,63,90,107,165,204,226]$ \\
    12 & $[11,17,28,46,58,91,122,140,216,259,283]$ \\
    13 & $[11,17,28,42,64,78,122,157,177,272,320,346]$ \\
    14 & $[11,17,28,42,64,78,122,157,177,272,356,408,436]$ \\
    15 & $[11,17,28,42,63,90,107,165,204,226,346,439,495,525]$ \\
    16 & $[11,17,28,46,58,91,122,140,216,259,283,432,534,594,626]$ \\
    17 & $[11,17,28,42,64,78,122,157,177,272,320,346,527,638,702,736]$ \\
    18 & $[11,17,28,42,64,78,122,157,177,272,356,408,436,662,783,851,887]$ \\
    19 & $[11,17,28,42,63,90,107,165,204,226,346,439,495,525,796,933,1005,1043]$ \\
    20 & $[11,17,28,46,58,91,122,140,216,259,283,432,534,594,626,948,1110,1192,1234]$ \\
    21 & $[11,17,28,42,64,78,122,157,177,272,320,346,527,638,702,736,1114,1304,1400,1449]$ \\
    \bottomrule
  \end{tabular}
\caption{\label{tab:n_k}Smallest values of $n_k$ that satisfy
  \cref{lem:shor-eq} for $2 \leq k \leq 21$ and one possible sequence
  of $[n_2,\dots,n_k]$ achieving the claimed value.}
\end{table}

\Cref{tab:n_k} can be used to show explicit bounds on the length of a
partial transversal in a Latin square. For example, \cref{tab:n_k}
shows that $n_{21} \geq 1449$. Thus, any Latin array of order
$n<1449$ has a partial transversal of length at least $n-20$.
By comparison, \tref{thm:n-sqrt-n} only implies that any Latin array of order
$n<21^2=441$ has a partial transversal of length at least $n-20$.

  \let\oldthebibliography=\thebibliography
  \let\endoldthebibliography=\endthebibliography
  \renewenvironment{thebibliography}[1]{%
    \begin{oldthebibliography}{#1}%
      \setlength{\parskip}{0.25ex}%
      \setlength{\itemsep}{0.25ex}%
  }%
  {%
    \end{oldthebibliography}%
  }

\newpage

\end{document}